\newcommand{\ket}[1]{{| #1 \rangle}}      
\newcommand{\cA}{\mathcal{A}}
\newcommand{\nn}{\nonumber}
\newcommand{\bea}{\begin{eqnarray}}
\newcommand{\ena}{\end{eqnarray}}
\newcommand{\be}{\begin{eqnarray*}}
\newcommand{\en}{\end{eqnarray*}}
\def\bel{\begin{eqnarray}}
\def\enl{\end{eqnarray}}
\newcommand{\C}{{\mathbb C}}
\newcommand{\Z}{{\mathbb Z}}
\newcommand{\al}{{\alpha}}
\newcommand{\ga}{{\gamma}}
\newcommand{\la}{{\lambda}}
\newcommand{\bla}{\boldsymbol{\lambda}}
\numberwithin{equation}{section}
\newtheorem{thm}{Theorem}[section]
\newtheorem{prop}[thm]{Proposition}
\newtheorem{lem}[thm]{Lemma}
\newcommand{\sln}{\mathfrak{sl}_{n+1}}
\newcommand{\gl}{\mathfrak{gl}}
\newcommand{\Uv}{U_v(\mathfrak{gl}_{n+1})}
\newcommand{\Uvv}{U_{v^{-1}}(\mathfrak{\gl_{n+1}})}
\newcommand{\V}{\mathcal V}
\newcommand{\bV}{\overline{\mathcal{V}}}
\newcommand{\K}{{\mathbb K}}
\newcommand{\one}{{\bf 1}}
\begin{document}

\title[Bosonic formula for the $\sln$ principal subspace]
{Gelfand-Zetlin basis, Whittaker vectors and a 
bosonic formula for the $\sln$ principal subspace}

\author{B. Feigin, M. Jimbo and T. Miwa}
\address{BF: Landau Institute for Theoretical Physics,
Russia, Chernogolovka, 142432, prosp. Akademika Semenova, 1a,   \newline
Higher School of Economics, Russia, Moscow, 
101000,  Myasnitskaya ul., 20 and
\newline
Independent University of Moscow, Russia, Moscow, 119002,
Bol'shoi Vlas'evski per., 11}
\email{bfeigin@gmail.com}
\address{MJ: Department of Mathematics, 
Rikkyo University, Tokyo 171-8501, Japan}
\email{jimbomm@rikkyo.ac.jp}
\address{TM: Department of Mathematics,
Graduate School of Science,
Kyoto University, Kyoto 606-8502,
Japan}\email{tmiwa@kje.biglobe.ne.jp}
\date{\today}
\keywords{Difference Toda Hamiltonian, quantum groups, 
fermionic formulas, bosonic formulas}

\begin{abstract}
We derive a bosonic formula for 
the character of the principal space in the
level $k$ vacuum module for $\widehat{\mathfrak{sl}}_{n+1}$, 
starting from a known fermionic formula for it. 
In our previous work, the latter was written as a sum 
consisting of Shapovalov scalar products of the Whittaker vectors 
for $U_{v^{\pm1}}(\mathfrak{gl}_{n+1})$. 
In this paper we compute these scalar products 
in the bosonic form, using the decomposition
of the Whittaker vectors in the Gelfand-Zetlin basis. 
We show further that the bosonic formula obtained in this way
is the quasi-classical decomposition of the fermionic formula.
\end{abstract}

\maketitle

\section{Introduction}\label{sec:1}
One of the central results in the theory of Kac-Moody algebras
is the Weyl formula for the characters of the irreducible representations.
This formula can be interpreted ``quasi-classically". It means the following.
Let $L_\chi$ be an integrable representation with the  highest weight $\chi$.
In $L_\chi$ there are some special vectors called
the extremal vectors. They are labelled by the Weyl group and have
the form $w\cdot v_\chi$, where $v_\chi$ is the highest weight vector,
$w$ is an element of the Weyl group and $w\cdot v$ denotes the projective action.
The Weyl formula reads as
\begin{align*}
{\rm ch}\,L_\chi=\sum_{w\in W}C_w(\chi),
\end{align*}
where $C_w(\chi)$ is interpreted as the character of $L_\chi$
in the vicinity of the extremal vector $w\cdot v_\chi$. In this interpretation
we suppose $\chi$ is ``big", so that the extremal vector $w\cdot v_\chi$
is well-separated from other extremal vectors. To be more precise it means 
that, when $\chi\rightarrow\infty$, generically the character $L_\chi$
in the vicinity of $w\cdot v_\chi$ stabilizes and gives $C_w(\chi)$. 
The Weyl formula states that the quasi-classical decomposition is exact
for finite $\chi$. One important point in the decomposition is that each term
$C_w(\chi)$ is, up to a simple monomial, 
the inverse of a (possibly infinite) product of simple factors.

Now suppose that $\hat{\mathfrak g}$
is an affine Kac-Moody algebra, and $L_k$ be the vacuum representation
of level $k$ with the highest weight vector $v_k$. Let
${\mathfrak g}={\mathfrak n}_+\oplus\mathfrak h\oplus{\mathfrak n}_-$
be the Cartan decomposition, and let
$\hat{\mathfrak n}_+={\mathfrak n}_+\otimes\mathbb C[t,t^{-1}]\subset
\hat{\mathfrak g}$ be the nilpotent subalgebra. Set
\begin{align*}
V^k=U(\hat{\mathfrak n}_+)\cdot v_k\subset L_k
\end{align*}
and call it the principal subspace in $L_k$. 
The quasi-classical formula for the character of $V^k$ can also be written.
For example, if ${\mathfrak g}=\mathfrak{sl}_2$, we have \cite{FL}
\begin{align*}
{\rm ch}\,V^k&\buildrel\hbox{\rm\small def}\over ={\rm Tr}_{V^k}q^dz^{H/2}\\
&=\sum_{m=0}^\infty
\frac{q^{km^2}z^{km}}{(q^{2m+1}z)_\infty(q)_m(q^{-2m+1}z^{-1})_m}\,,
\end{align*}
where $d=t\frac d{dt}$ is the scaling operator, $H$ is the generator 
of the Cartan subalgebra, and
$(z)_m=\prod_{i=1}^m(1-q^{i-1}z)$.
In this formula the right hand side is understood as power series in $z$.
The $m=0,1,2,\cdots$ terms are the contributions from the extremal vectors
$v_k$, $e_{-1}{}^kv_k$, $e_{-3}{}^ke_{-1}{}^kv_k,\cdots$.

In general
\begin{align*}
{\rm ch}\,V^k=\sum_{\gamma\in Q^+}C_\gamma(k)
\end{align*}
where $Q$ is the root lattice, and the subset
$Q^+$ consists of linear combinations of 
the simple roots with non-negative integer coefficients.

In \cite{FFJMM}, it was proved that for ${\mathfrak g}=\mathfrak{sl}_3$
\begin{align}
{\rm ch}\,V^k&=\sum_{d_1,d_2=0}^\infty
q^{k(d_1^2+d_2^2-d_1d_2)}z_1^{kd_1}z_2^{kd_2}
J_{d_1,d_2}(q,q^{2d_1-d_2}z_1,q^{2d_2-d_1}z_2),\label{BOS}\\
J_{d_1,d_2}(q,z_1,z_2)&=\frac1{(qz_1)_\infty(qz_2)_\infty(qz_1z_2)_\infty}\nn\\
&\times
\frac{(qz_1^{-1}z_2^{-1})_{d_1+d_2}}
{(q)_{d_1}(q)_{d_2}(qz_1^{-1})_{d_1}(qz_2^{-1})_{d_2}
(qz_1^{-1}z_2^{-1})_{d_1}(qz_1^{-1}z_2^{-1})_{d_2}}.\nn
\end{align}
The terms in this formula are still factorized but they have nontrivial factors
in the numerators. On the other hand, in \cite{FFJMM},
we have also derived another expression for the same character, in which
$J_{d_1,d_2}(q,q^{2d_1-d_2}z_1,q^{2d_2-d_1}z_2)$ is split into 12 terms, each of which
is a simple power in $q,z_1,z_2$ with a factorized denominator.
We call such formula the ``desingularization". In general, $C_\gamma(k)$
is complicated and cannot be factorized.
However, in this paper, we show that at least
a desingularization can be found for the character of the principal subspace
for the vacuum module where $\mathfrak g=\mathfrak{sl}_{n+1}$ (see Theorem \ref{THEREM}
and Proposition \ref{prop:Jd}). For $\mathfrak g=\mathfrak{sl}_3$ we have
\begin{align}\label{SL3}
J_{d_1,d_2}(q,z_1,z_2)
&=\sum_{m=0}^{\min(d_1,d_2)}
(-z_1)^mq^{-m(d_2-m)+m(m-1)/2}
\frac{1}{(q)_{m}(q)_{d_1-m}(q)_{d_2-m}}
\\
&\times
\frac{1}{(q z_1)_{m}(qz_1z_2)_{m}(qz_2)_{d_2-m}(q^{-d_2+m}z_1)_{m}
(q^{-d_2+2m+1}z_1)_{d_1-m}}.\nn
\end{align}

We call such a formula a bosonic formula.
We note that in \cite{FFJMM}  
bosonic formulas for more general modules over $\hat{\mathfrak n}_+$
are obtained in the case where $\mathfrak g=\mathfrak{sl}_3$, in which we 
used more terms than
the case of the vacuum module in this paper.

Following some geometrical ideas from \cite{BrFi}, 
one naturally expects 
that 
in the desingularization of the $\mathfrak{sl}_{n+1}$ formula the terms are 
labelled
by some basis in the Verma modules of $U_v(\mathfrak{gl}_{n+1})$ where $q=v^2$,
actually by the Gelfand-Zetlin basis.
Our proof goes as follows. In \cite{FFJMM2}, we managed to rewrite the 
fermionic formula \cite{FS}
for ${\rm ch}\,V^k$ in terms of the eigenfunctions of the quantum difference 
Toda Hamiltonian.
Such eigenfunctions were written
by using the Whittaker vectors in the Verma modules for $U_v(\mathfrak{gl}_{n+1})$.
In this paper, we decompose the Whittaker vectors in the Gelfand-Zetlin basis.
This decomposition produces the decomposition of the coefficients of the
eigenfunctions. Moreover, each term of this decomposition has a factorized form.
As a by-product we get some interesting fermionic formulas
and their quasi-classical decompositions.

Fermionic formulas are statistical sums over configurations of particles with color and weight.
A configuration of particles is determined by a set of non-negative integers $\mathbf{m}=(m_{i,t})_{(i,t)\in\mathcal S}$
which represents the number of particles with color $i$ and weight $t$.
Given a function $B(\mathbf{m})$, the fermionic sum is of the form
\begin{align*}
F(\mathcal S,B)=\sum_{\mathbf{m}}\frac{q^{B(\mathbf{m})}}{\prod_{(i,t)\in\mathcal S}(q)_{m_{i,t}}}.
\end{align*}
See \eqref{B} for the case we study in this paper. Let us discuss the fermionic formula for the character
${\rm ch}\,V^k$ for $\mathfrak g=\mathfrak{sl}_3$. In this case we take $\mathcal S_k=\{1,2\}\times[1,k]$
for $\mathcal S$.
In \cite{FFJMM2} we have shown that the quasi-classical decomposition is valid in the following sense.
Fix $(m_1,m_2,n_1,n_2)\in\Z_{\geq0}^4$, and consider the above sum with the restriction that
\begin{align*}
\sum_{1\leq t<\hskip-3pt<k}m_{i,t}=m_i,\quad
\sum_{1<\hskip-3pt<t\leq k}m_{i,t}=n_i.
\end{align*}
In the limit $k\rightarrow\infty$ this sum approaches some rational function $F_{1,k}(m_1,m_2,n_1,n_2)$.
In \cite{FFJMM2} we have shown that for finite $k\geq0$, we have the equality
\begin{align*}
F(\mathcal S_k,B)=\sum_{m_1,m_2,n_1,n_2}F_{1,k}(m_1,m_2,n_1,n_2).
\end{align*}
We call this equality the quasi-classical decomposition. In this paper we consider the case where we take
\begin{align*}
\mathcal S_{k',k}=\{(i,t)|1\leq t\leq k\delta_{i,1}+k'\delta_{i,2}\}.
\end{align*}
In the limit $1<\hskip-3pt<k'<\hskip-3pt<k$, we have a similar decomposition:
\begin{align*}
F(\mathcal S_{k',k},B)=\sum_{m_1,m_2,n_1,n_2,l_1}F_{1,k',k}(m_1,m_2,n_1,n_2,l_1).
\end{align*}
The restriction for the sum for $F_{1,k',k}(m_1,m_2,n_1,n_2,l_1)$ is such that
\begin{align*}
\sum_{1\leq t<\hskip-3pt<k'}m_{i,t}=m_i,\quad
\sum_{1<\hskip-3pt<t\leq k'}m_{1,t}+\sum_{k'<t<\hskip-3pt<k}m_{1,t}=n_1,
\sum_{1<\hskip-3pt<t\leq k'}m_{2,t}=n_2,\quad
\sum_{k'<\hskip-3pt<t\leq k}m_{1,t}=l_1.
\end{align*}
There are two remarkable features. 
First, the decomposition is exact for finite $k\geq k'\geq1$. Therefore, if
$k=k'$, it gives another formula for ${\rm ch}\,V^k$. Second, each summand in this decomposition
is factorized. In fact, summing up over $m_1,m_2$ we obtain \eqref{BOS}, \eqref{SL3}.
We will derive such a 
decomposition for general $\mathfrak g=\mathfrak{sl}_{n+1}$
by using the Drinfeld Casimir elements of smaller rank.

Finally, we note that our paper is inspired by \cite{BrFi}.
Actually we study the structure
of the singular points on some moduli spaces by using the equivalent
language from the representation theory of affine Lie algebras.

\section{Whittaker vectors for $\gl_{n+1}$}\label{sec:2}
In this section we recall some 
known facts about Whittaker vectors for $\gl_{n+1}$
and their Shapovalov scalar product,  
including the Toda recursion and fermionic formulas.
We give their explicit formulas 
using the Gelfand-Zetlin basis of Verma modules. 

\subsection{Gelfand-Zetlin basis}\label{sec:GZ}

Throughout the text, we consider the complex Lie algebra $\gl_{n+1}$. 
Let $\epsilon_0,\cdots,\epsilon_n$ be a basis of the Cartan subalgebra
orthonormal with respect to the invariant scalar product $(~,~)$. 
The simple roots and fundamental weights are expressed as 
$\al_i=\epsilon_{i-1}-\epsilon_i$, 
$\omega_i=\epsilon_0+\cdots+\epsilon_{i-1}$, $1\le i\le n$.  
We set $Q=\oplus_{i=1}^n\Z\al_i$, 
$P=\oplus_{i=0}^n\Z\epsilon_i$, and $\rho=\sum_{i=1}^n\omega_i$. 

Let $\Uv$ be the corresponding quantum group over $\K=\C(v)$, 
with generators $\{E_i,F_i\}_{1\le i\le n}$, $\{v^{\pm \epsilon_i}\}_{0\le i\le n}$
and standard defining relations. We set $K_i=v^{\epsilon_{i-1}-\epsilon_i}$. 
For $\la=\sum_{i=0}^n\la_i\epsilon_i \in P$, 
let $\V^\la$ be the Verma module over $\Uv$ generated 
by the highest weight vector $\one^\la$ with defining relations
\begin{equation*}
E_i\one^\la=0 \quad (1\le i\le n),
\qquad v^{\epsilon_i}\one^\la=v^{\la_i}\one^\la
\quad (0\le i\le n).
\end{equation*}

Recall that $\V^\la$ has a distinguished 
basis (known as the Gelfand-Zetlin basis) 
relative to the tower of subalgebras
\begin{equation}
\cA_0\subset \cA_1\subset \cdots \subset \cA_n \,,
\label{tower}
\end{equation}
where 
$\cA_k\simeq U_v(\mathfrak{gl}_{k+1})$ ($k=0,\ldots,n$)   
denotes the subalgebra of $\Uv$ generated by 
$\{E_i,F_i\}_{1\le i\le k}$ and $\{v^{\pm\epsilon_i}\}_{0\le i\le k}$.  
Each subspace of $\V^\la$ which is 
jointly invariant under $\cA_k$'s is one dimensional. 
Such subspaces are labeled by arrays of numbers 
\begin{equation}
\bla=
\begin{matrix}
\lambda_{0,n}&              & \lambda_{1,n} &                   & \cdots & \lambda_{n-1,n} && \lambda_{n,n} \\
    &\lambda_{0,n-1}&           &\lambda_{1,n-1}&& \cdots                &\lambda_{n-1,n-1}& \\
&&\ddots&&\ddots& && \\
&&&\lambda_{0,1}&             &\lambda_{1,1}&& \\
&&&             &\lambda_{0,0}&             && \\
\end{matrix}
\label{GZ} 
\end{equation}
called the Gelfand pattern. 
Here we set 
\begin{align}
\lambda_{k,i}&=\lambda_k-m_{k,i}\,,
\label{hwt1}
\end{align}
and $m_{k,i}$ are non-negative integers satisfying 
\begin{equation}
0=m_{k,n}\le m_{k,n-1}\le m_{k,n-2}\le \cdots\le m_{k,k}
\quad (0\le k\le n)\,.
\label{mij}
\end{equation}
In particular, we have
\begin{align*}
\lambda_{k,n}=\lambda_k.
\end{align*}
For economy of space we shall also write $\bla$ as
\begin{align*}
\bla=(\la^{(n)},\cdots,\la^{(0)}),
\quad
\la^{(i)}=\la_{0,i}\epsilon_0+\cdots+\la_{i,i}\epsilon_i.
\end{align*}

By choosing an appropriate generator 
$\ket{\bla}=\ket{\la^{(n)},\cdots,\la^{(0)}}$  
of each subspace corresponding to \eqref{GZ},  
the action of Chevalley generators can be described explicitly. 
For this purpose it is 
convenient to extend the base field from $\K$ to 
$\mathcal{R}$ obtained by adjoining 
all elements of the form $\sqrt{f}$ ($f\in \K$).  
We use the same symbols $\Uv$ (resp. $\V^\la$) to denote 
$\mathcal{R}\otimes_{\K}\Uv$  
(resp. $\mathcal{R}\otimes_{\K}\V^\la$).  
Then the Chevalley generators act by the formula \cite{J}
\begin{align}
v^{\epsilon_i}\ket{\bla}&=v^{h_i(\bla)-h_{i-1}(\bla)}\ket{\bla}\,,
\label{Chev1}\\
E_i \ket{\bla}&=
\sum_{k=0}^{i-1}c_{k,i-1}(\bla)\ket{\bla^{(k,i-1)}_+}\,,
\label{Chev2}\\
F_i \ket{\bla}&=
\sum_{k=0}^{i-1}c_{k,i-1}(\bla^{(k,i-1)}_-)
\ket{\bla^{(k,i-1)}_-}\,.
\label{Chev3}
\end{align}
Here $h_i(\bla)=\sum_{k=0}^{i}\la_{k,i}$, 
and 
$\bla^{(k,i-1)}_\pm$ signifies the Gelfand pattern  
wherein $\la_{k,i-1}$ is replaced by $\la_{k,i-1}\pm1$ 
while keeping all other $\la_{l,j}$'s unchanged. 
The coefficients $c_{k,i-1}(\bla)$ 
have the factorized form 
\begin{equation*}
c_{k,i-1}(\bla)^2=
-\frac{\prod_{0\le l\le i-2}[\la_{l,i-2}-\la_{k,i-1}-l+k-1]
\prod_{0\le l\le i}[\la_{l,i}-\la_{k,i-1}-l+k]}
{\prod_{0\le l\le i-1\atop l(\ne k)}
[\la_{l,i-1}-\la_{k,i-1}-l+k-1]
[\la_{l,i-1}-\la_{k,i-1}-l+k]}\,.
\end{equation*}
Here and after, we use the symbols $[m]=(v^m-v^{-m})/(v-v^{-1})$, 
$[m]!=[m][m-1]\cdots[1]$, and 
$[m]_k=[m][m+1]\cdots[m+k-1]$.  

\subsection{Whittaker vectors}
The Verma module carries an obvious grading
$\V^\la=\oplus_{\beta\in Q^+}(\V^\la)_\beta$ where
\begin{equation}
(\V^\la)_{\beta}=\{w\in \V^\la\mid
K_i w=v^{(\al_i,\la-\beta)}w\quad(1\le i\le n)
\}\,.
\label{wt1}
\end{equation}
A Whittaker vector 
$\theta^\la=\sum_{\beta\in Q^+}\theta^\la_\beta$ 
is an element of a completion
$\prod_{\beta\in Q^+}(\V^\la)_\beta$ 
of the Verma module. 
It is uniquely 
defined by the conditions that $\theta^\la_0=\one^\la$
and 
\begin{equation}
E_i K_i^{i-1}\ \theta^\la
=\frac{1}{1-v^2}\ \theta^\la
\quad (1\le i\le n)\,.
\label{Whit}
\end{equation}
Let us give an explicit formula for $\theta^\la$ 
in terms of the Gelfand-Zetlin basis. 
For $i=1,\ldots,n$ and parameters 
$\mu=(\mu_0,\cdots,\mu_i)$, 
$\nu=(\nu_0,\cdots,\nu_{i-1})$
satisfying $\mu_k-\nu_k\in\Z_{\ge 0}$, define  
\begin{align}
&A_i(\mu,\nu)^2
=
\label{A}
\\
&\quad
\frac{1}{\prod_{k=0}^{i-1}[\mu_k-\nu_k]!}
\cdot
\frac{1}{\prod_{0\le k< l\le i-1}
[\nu_k-\nu_l-k+l+1]_{\mu_k-\nu_k}}
\cdot
\frac{1}{\prod_{0\le k< l\le i}[\nu_k-\mu_l-k+l]_{\mu_k-\nu_k}}\,.
\nn
\end{align}

\begin{prop}
In the Gelfand-Zetlin basis \eqref{GZ}, 
the Whittaker vector $\theta^\la$ 
has the following representation: 
\begin{equation}
\theta^\la=\sum_{\bla}
\left(\frac{1}{1-v^2}\right)^{ht(\bla)}
\prod_{i=1}^nC_i(\la^{(i)},\la^{(i-1)})\ \ket{\bla}\,.
\label{GTWhit}
\end{equation}
Here we have set 
\begin{align*}
ht(\bla)&=\sum_{0\le k\le i\le n-1}(\la_{k,n}-\la_{k,i}),
\\
C_i(\la^{(i)},\la^{(i-1)})&=v^{p_i(\la^{(i)},\la^{(i-1)})}
A_i(\la^{(i)},\la^{(i-1)})
\\
p_i(\la^{(i)},\la^{(i-1)})
&=(i-1)\left(
\sum_{k=0}^{i-1}\la_{k,i-1}
\Bigl(\sum_{k=0}^{i-1}\la_{k,i-1}
-\sum_{k=0}^{i}\la_{k,i}\Bigr)
-\sum_{0\le k<l\le i-1}\la_{k,i-1}\la_{l,i-1}
+\sum_{0\le k<l\le i}\la_{k,i}\la_{l,i}\right)
\\
&-\sum_{k=1}^{i-1}k(i-k)(\la_{k,i-1}-\la_{k,i})\,.
\end{align*}
The sum ranges over all Gelfand patterns \eqref{GZ}--\eqref{mij}
with 
fixed $\la_0,\ldots,\la_{n}$.  
\end{prop}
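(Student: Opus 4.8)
The plan is to exploit the stated uniqueness: since $\theta^\la$ is the unique element of the completion $\prod_{\beta}(\V^\la)_\beta$ with $\theta^\la_0=\one^\la$ satisfying the Whittaker conditions \eqref{Whit}, it suffices to verify that the right-hand side of \eqref{GTWhit}, which I write as $\tilde\theta^\la=\sum_{\bla}d_{\bla}\ket{\bla}$ with $d_{\bla}=\left(\tfrac{1}{1-v^2}\right)^{ht(\bla)}\prod_{i=1}^nC_i(\la^{(i)},\la^{(i-1)})$, meets both requirements. The normalization is immediate: for the top pattern (all $m_{k,i}=0$) one has $ht(\bla)=0$, every factor $A_i=1$ (all arguments of \eqref{A} reduce to $[0]!=1$ and empty products), and $p_i=0$, so $d_{\bla}=1$ and $\tilde\theta^\la_0=\one^\la$.

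First I would transcribe \eqref{Whit} into a recursion for the $d_{\bla}$. Applying $K_i^{i-1}$ (diagonal, with eigenvalue $v^{(i-1)(2h_{i-1}(\bla)-h_{i-2}(\bla)-h_i(\bla))}$ by \eqref{Chev1} and $K_i=v^{\epsilon_{i-1}-\epsilon_i}$) and then $E_i$ by \eqref{Chev2}, and extracting the coefficient of a fixed $\ket{\bla}$ (whose preimages are the patterns $\bla^{(k,i-1)}_-$), the equation \eqref{Whit} becomes, for every $\bla$ and every $1\le i\le n$,
\begin{equation*}
\sum_{k=0}^{i-1}d_{\bla^{(k,i-1)}_-}\;v^{(i-1)(2h_{i-1}-h_{i-2}-h_i)(\bla^{(k,i-1)}_-)}\;c_{k,i-1}(\bla^{(k,i-1)}_-)=\frac{1}{1-v^2}\,d_{\bla}.
\end{equation*}
Two structural facts simplify the ratio $d_{\bla^{(k,i-1)}_-}/d_{\bla}$: lowering $\la_{k,i-1}$ by one raises $ht$ by exactly one, which produces precisely the factor $1/(1-v^2)$ on the right; and among the $C_j$ only $C_i$ and $C_{i-1}$ involve the modified row $\la^{(i-1)}$ (as its lower, resp. upper, boundary). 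Hence the $1/(1-v^2)$ cancels and the whole statement collapses to a \emph{local} identity involving only the three rows $\la^{(i)},\la^{(i-1)},\la^{(i-2)}$:
\begin{equation*}
\sum_{k=0}^{i-1}\frac{C_i|_{\bla^{(k,i-1)}_-}}{C_i|_{\bla}}\cdot\frac{C_{i-1}|_{\bla^{(k,i-1)}_-}}{C_{i-1}|_{\bla}}\cdot v^{(i-1)(2h_{i-1}-h_{i-2}-h_i)(\bla^{(k,i-1)}_-)}\;c_{k,i-1}(\bla^{(k,i-1)}_-)=1.
\end{equation*}

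Next I would make each summand explicit. Since $h_{i-1}(\bla^{(k,i-1)}_-)=h_{i-1}(\bla)-1$ while $h_{i-2},h_i$ are unchanged, the $v$-powers coming from $p_i$, $p_{i-1}$ and from $K_i^{i-1}$ combine into a single monomial whose balance must be checked (this is what the quadratic exponents $p_i$ are designed to arrange). The square-root quantities $A_i$, $A_{i-1}$, $c_{k,i-1}$ combine, through \eqref{A} and the formula for $c_{k,i-1}^2$, into an expression whose square is an explicit rational function of the $v^{\la_{l,j}}$. A genuine point to verify here is that the sign and square-root ambiguities attached to the choice of generators $\ket{\bla}$ cancel termwise — forced by the rationality of the products appearing in \eqref{Chev2}--\eqref{Chev3} — so that each summand is an honest (square-root-free) rational term, including the overall sign coming from the $-$ in $c_{k,i-1}^2$.

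Finally, what remains is a terminating sum of $i$ factorized $q$-terms equal to $1$, with the $b$-row resonance denominators $\prod_{l\ne k}[\la_{l,i-1}-\la_{k,i-1}-\cdots]$ playing the role of Lagrange nodes and the $\la_{l,i}$- and $\la_{l,i-2}$-rows entering the numerator. I expect this to be a $q$-analogue of a Lagrange-interpolation (partial-fraction) identity, recastable as a terminating balanced $q$-hypergeometric summation of $q$-Saalsch\"utz type. Proving this identity — by clearing denominators and comparing residues in $v^{\la_{k,i-1}}$, or by matching to the standard balanced ${}_3\phi_2$ — together with the bookkeeping of the $v$-exponents $p_i$ and the consistency of the square roots, is the main obstacle; everything else is the reduction described above, applied uniformly for each $1\le i\le n$.
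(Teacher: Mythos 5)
Your proposal follows essentially the same route as the paper's proof: a direct verification of \eqref{Whit} on the Gelfand--Zetlin expansion coefficients via \eqref{Chev1}--\eqref{Chev2}, reducing (after the cancellations you describe) to a local terminating summation identity. The identity you flag as ``the main obstacle'' is exactly the one the paper records, namely $\sum_{l=0}^i\frac{\prod_{k=0}^{i-1}[a_k-b_l]}{\prod_{k\neq l}[b_k-b_l]}\,v^{-\sum_{k}a_k+\sum_{k\neq l}b_k}=1$, and after substituting $x_l=v^{2b_l}$, $y_k=v^{2a_k}$ it is not a $q$-Saalsch\"utz summation but the elementary Lagrange-interpolation/residue identity for $\prod_{k}(x-y_k)\big/\bigl(x\prod_{k}(x-x_k)\bigr)$, so no genuine obstacle remains.
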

\begin{proof}
The proof is a direct calculation using 
formulas  \eqref{Chev1},\eqref{Chev2}
for the action of $E_i,K_i$. 
The defining relations \eqref{Whit}
reduce to the identities
\begin{equation*}
\sum_{l=0}^i\frac{\prod_{k=0}^{i-1}[a_k-b_l]}
{\prod_{k=0\atop k\neq l}^{i}[b_k-b_l]}
\ v^{-\sum_{k=0}^{i-1}a_k+\sum_{k=0\atop k\neq l}^i b_k}=1\,.
\end{equation*}
\end{proof}

\subsection{Scalar product}

The main object of our interest
is the scalar product of the Whittaker and the dual Whittaker
vectors. 
To define the latter, we consider the quantum group $\Uvv$
with parameter $v^{-1}$. Its generators are denoted by 
$\{\bar E_i,\bar F_i\}_{1\le i\le n}$, $\{{\bar v}^{\pm\epsilon_i}\}_{0\le i\le n}$.
Let $\bV^\la$ 
be the Verma module over $\Uvv$ generated by the
highest weight vector $\bar\one^\la$ with defining relations
\begin{equation*}
\bar E_i\bar \one^\la=0 \quad (1\le i\le n), 
\qquad 
{\bar v}^{\epsilon_i} \bar \one^\la=v^{-\la_i}\bar\one^\la
\quad (0\le i\le n).
\end{equation*}
The dual Whittaker vector is defined similarly as an element
$\bar\theta^\la\in 
\prod_{\beta\in Q^+}(\bar\V^\la)_\beta$, 
imposing $\bar\theta^\la_0=\bar\one^\la$ and
\begin{equation}
\bar E_i \bar K_i^{i-1}\ \bar \theta^\la
=\frac{1}{1-v^{-2}}\ \bar \theta^\la
\qquad (1\le i\le n)
\label{DWhit}
\end{equation}
in place of \eqref{Whit}.

Let $\sigma$ be the $\mathcal{R}$-linear 
anti-isomorphism of algebras
given by
\begin{equation}
\sigma:\Uv\to \Uvv,
\quad E_i\mapsto \bar F_i,\
F_i\mapsto \bar E_i,
\ K_i \mapsto \bar K_i^{-1}.
\label{sigma}
\end{equation}
There is a unique non-degenerate 
$\mathcal{R}$-bilinear pairing
$(~,~):\V^\la\times\bV^\la\to\mathcal{R}$ 
such that $(\one^\la,\bar\one^\la)=1$ and
\begin{equation}
(x w, w')=(w,\sigma(x)w')
\label{Shapo}
\end{equation}
for all $x\in\Uv$ and $w\in\V^\la$, $w'\in \bV^\la$.
We call \eqref{Shapo} the Shapovalov pairing. 
The Gelfand-Zetlin basis $\{\ket{\bla}\}$
of $\V^\la$ and $\{\overline{\ket{\bla}}\}$
of $\bV^\la$ 
are orthonormal with respect to the Shapovalov pairing:
$(\ket{\bla}, \overline{\ket{\bla'}})=\delta_{\bla,\bla'}$. 

In \cite{FFJMM2}, we considered the scalar product 
\begin{equation}
J^\la_\beta=J^\la_\beta[0,\infty)
=v^{-(\beta,\beta)/2+(\la,\beta)}\
(\theta^\la_\beta,\bar\theta^\la_\beta).
\label{Jla0}
\end{equation}
We set $J^\la_\beta=0$ unless $\beta\in Q^+$. 
The notation $J^\la_\beta[0,\infty)$ 
comes from the fact that the corresponding fermionic formula
is related to the interval $[0,\infty)$ 
(see Theorem 3.2 in \cite{FFJMM2}
and Proposition \ref{prop:fermi} below).  

In what follows, we choose the variables
\footnote{The present definition for $z_i$ is different from 
\cite{FFJMM2} where $z_i=q^{-(\la,\al_i)}$ was used.}
$z_i=q^{-(\la+\rho,\al_i)}$ and write 
\begin{equation} 
J_{d_1,\cdots,d_n}(q,z_1,\cdots,z_n)
=J^\la_\beta[0,\infty)
\qquad \text{ for $\beta=\sum_{i=1}^nd_i\al_i$}\,.
\label{Jdsec2}
\end{equation} 
These are rational functions in $q$ and $z_1,\cdots,z_n$. 

The explicit formula \eqref{GTWhit} (and for its dual) 
yields the following expression for \eqref{Jdsec2}. 
Set 
\begin{equation*} 
 z_{k,l}=\prod_{j=k+1}^lz_j\,. 
\end{equation*} 

\begin{prop}\label{prop:Jd}
We have 
\begin{align} 
J_{d_1,\cdots,d_n}(q,z_1,\cdots,z_n) 
&=\sum_{m_{0,i-1}+\ldots+m_{i-1,i-1}=d_i\atop 1\le i\le n}
(-1)^{\sum_{i=1}^nd_i-\sum_{i=0}^{n-1}m_{i,i}} 
\label{JdGZ}\\
&\times q^{p(m)} 
\prod_{i=1}^nz_j^{\sum_{k=0}^{j-1}\sum_{i=j+1}^n m_{k,i-1}}
\nn
\\ 
&\times 
\prod_{0\le k<i\le n}\frac{1}{(q)_{m_{k,i-1}-m_{k,i}}} 
\times 
\prod_{0\le k<l<i\le n} 
\frac{1}{(q^{m_{k,i}-m_{l,i-1}}z_{k,l})_{m_{k,i-1}-m_{k,i}}} 
\nn\\ 
&\times 
\prod_{0\le k<l\le i\le n} 
\frac{1}{(q^{m_{k,i}-m_{l,i}+1}z_{k,l})_{m_{k,i-1}-m_{k,i}}} 
\,, 
\nn 
\end{align}  
where     
\begin{equation*}
p(m)=-\sum_{0\le k<l\le i\le n-1}m_{k,i}m_{l,i}    
+\sum_{0\le k<l< i\le n-1}m_{k,i}m_{l,i-1}  
+\frac{1}{2}\sum_{0\le k<i\le n-1}  
m_{k,i}(m_{k,i}-1)\,. 
\end{equation*} 
The sum is taken over all non-negative integers
$m_{k,i}$ satisfying \eqref{mij} and $\sum_{k=0}^{i-1}m_{k,i-1}=d_i$. 
\end{prop}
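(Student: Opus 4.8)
The plan is to compute the Shapovalov scalar product $J^\la_\beta=v^{-(\beta,\beta)/2+(\la,\beta)}(\theta^\la_\beta,\bar\theta^\la_\beta)$ directly by substituting the explicit Gelfand-Zetlin expansion \eqref{GTWhit} of the Whittaker vector and the analogous expansion of $\bar\theta^\la$. Since the Gelfand-Zetlin bases $\{\ket{\bla}\}$ and $\{\overline{\ket{\bla}}\}$ are orthonormal, i.e. $(\ket{\bla},\overline{\ket{\bla'}})=\delta_{\bla,\bla'}$, the pairing collapses to a single sum over Gelfand patterns $\bla$ of weight $\beta$:
\begin{align*}
(\theta^\la_\beta,\bar\theta^\la_\beta)=\sum_{\bla}\left(\frac{1}{1-v^2}\right)^{ht(\bla)}\left(\frac{1}{1-v^{-2}}\right)^{ht(\bla)}\prod_{i=1}^nC_i(\la^{(i)},\la^{(i-1)})\,\bar C_i(\la^{(i)},\la^{(i-1)}),
\end{align*}
where $\bar C_i$ is the coefficient in $\bar\theta^\la$, obtained from $C_i$ by $v\mapsto v^{-1}$. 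The constraint $\sum_{k=0}^{i-1}m_{k,i-1}=d_i$ picks out the weight-$\beta$ component, $\beta=\sum d_i\al_i$, so the sum is exactly over the index set in the statement. First I would establish this reduction and identify the sum over patterns with the stated sum over the $m_{k,i}$.

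The bulk of the work is to show that the summand equals the factorized product on the right-hand side of \eqref{JdGZ}. The power-of-$q$ factor splits into three contributions that must be reconciled: the prefactor exponent $-(\beta,\beta)/2+(\la,\beta)$, the quadratic phases $p_i$ appearing in $C_i$ (and their $v\mapsto v^{-1}$ images, which cancel against the phases in $\bar C_i$ up to sign), and the exponents hidden in the products $A_i(\mu,\nu)^2$ once each $[m]=(v^m-v^{-m})/(v-v^{-1})$ is rewritten in terms of $q=v^2$. First I would convert $\bigl(\tfrac{1}{1-v^2}\bigr)^{ht}\bigl(\tfrac{1}{1-v^{-2}}\bigr)^{ht}$ into a sign times a power of $q$, using $(1-v^2)(1-v^{-2})=-v^{-2}(1-v^2)^2=-q^{-1}(1-q)^2$; this produces the overall sign $(-1)^{\sum d_i-\sum m_{i,i}}$ together with part of $p(m)$. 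Next I would pass $A_i(\mu,\nu)^2\cdot\bar A_i(\mu,\nu)^2$ through the substitution $v\to q$, converting each $[m]!$ into $(q)_m$ up to a monomial prefactor and each shifted factorial $[a]_c$ into a $q$-Pochhammer symbol $(q^a z)_c$, where the variable $z_{k,l}=\prod_{j=k+1}^l z_j$ emerges from the differences $\la_{k,i}-\la_{l,i}$ via the identification $z_i=q^{-(\la+\rho,\al_i)}$.

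The main obstacle, and the step demanding genuine bookkeeping, is the matching of the three Pochhammer products in \eqref{JdGZ} against the three denominator products in $A_i(\mu,\nu)^2$ — namely $\prod[\mu_k-\nu_k]!$, $\prod_{k<l}[\nu_k-\nu_l-k+l+1]_{\mu_k-\nu_k}$, and $\prod_{k\le l}[\nu_k-\mu_l-k+l]_{\mu_k-\nu_k}$ — after specializing $\mu=\la^{(i)}$, $\nu=\la^{(i-1)}$ and rewriting everything via \eqref{hwt1} in terms of the $m_{k,i}$. Here one must carefully track the shifts $\la_{k,i}=\la_k-m_{k,i}$, convert each bracketed linear form into the exponent of $q$ in the Pochhammer base, and correctly absorb all the $\la$-dependence into the $z_{k,l}$ variables so that the result is the stated rational function in $q$ and the $z_i$ with no residual dependence on the individual $\la_k$. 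I expect the index ranges on the three products in \eqref{JdGZ} ($0\le k<i$; $0\le k<l<i$; $0\le k<l\le i$) to fall out of the index ranges in $A_i$ once summed over $i$, but verifying the exact half-integer phase $p(m)$ — in particular that all the $p_i$ phases combine with the $A_i$ monomials to give precisely $p(m)$ and leave an overall factorized expression — is the delicate endpoint of the calculation. Assembling the monomial $\prod_i z_j^{\sum_{k}\sum_{i} m_{k,i-1}}$ is then a matter of collecting the linear-in-$\la$ terms left over after the phases are matched.
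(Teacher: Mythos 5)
Your proposal is correct and follows essentially the same route as the paper, which justifies Proposition \ref{prop:Jd} precisely by substituting the Gelfand--Zetlin expansion \eqref{GTWhit} of $\theta^\la$ and its dual into the Shapovalov pairing \eqref{Jla0}, collapsing the double sum via orthonormality of the bases, and converting the brackets $[m]$, the phases $p_i$, and the prefactor $v^{-(\beta,\beta)/2+(\la,\beta)}$ into the stated sign, $q^{p(m)}$, and $q$-Pochhammer symbols in the variables $z_{k,l}$. The bookkeeping you flag as delicate is indeed the entire content of the (omitted) computation, and your outline of it is accurate.
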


\noindent{\it Example.} 
We have 
\begin{align*}
n=1&: 
J_{d_1}(q,z_1)=\frac{1}{(q)_{d_1}(q z_1)_{d_1}},
\\
n=2&: 
J_{d_1,d_2}(q,z_1,z_2)
=\sum_{m=0}^{\min(d_1,d_2)}
(-z_1)^mq^{-m(d_2-m)+m(m-1)/2}
\frac{1}{(q)_{m}(q)_{d_1-m}(q)_{d_2-m}}
\\
&\times
\frac{1}{(q z_1)_{m}(qz_1z_2)_{m}(qz_2)_{d_2-m}(q^{-d_2+m}z_1)_{m}
(q^{-d_2+2m+1}z_1)_{d_1-m}}\,.
\end{align*}
The second formula can be further simplified to 
\begin{equation*}
J_{d_1,d_2}(q,z_1,z_2)
=\frac{(q z_1z_2)_{d_1+d_2}}
{(q)_{d_1}(q)_{d_2}(q z_1)_{d_1}(q z_2)_{d_2}
(q z_1z_2)_{d_1}(q z_1z_2)_{d_2}}\,.
\end{equation*}
The existence of a factorized form
is a specific (and rather accidental) feature
of $n=1,2$. It does not hold for $n\ge 3$.
\subsection{Toda Hamiltonian and fermionic formula}
\label{sec:Toda}

The quantity $J_{d_1,\ldots,d_n}(q,z_1,\ldots,z_n)$ admits, 
besides the explicit formula  \eqref{JdGZ},  
other ways of characterization.
For completeness, 
we quote these facts from the literature
adapting to the present notation.

The first is through 
the quantum difference Toda Hamiltonian of type $A$.
It is a $q$-difference operator which acts on functions 
$f(y_1,\cdots,y_n)$:  
\begin{equation}
Hf=\sum_{i=0}^n D_i^{-1}D_{i+1}\bigl(z_{i,n}(1-y_i)f\bigr)\,.
\label{Hamil}
\end{equation}
Here $D_i$ stands for the $q$-shift operator
$(D_if)(y_1,\ldots,y_i,\ldots,y_n)
=f(y_1,\ldots,q y_i,\ldots,y_n)$,  
and we set $y_0=0$, $D_0=D_{n+1}=1$. 

\begin{prop}\label{prop:Toda}\cite{Sev,Et}
The generating series 
\begin{equation}
F(q,y_1,\ldots,y_n;z_1,\ldots,z_n)
=\sum_{d_1,\ldots,d_n\ge0}
J_{d_1,\ldots,d_n}(q,z_1,\ldots,z_n)\
y_1^{d_1}\cdots y_n^{d_n}\,
\label{F}
\end{equation}
is an eigenfunction of the Toda Hamiltonian:
\begin{equation*}
H\ F=\Bigl(\sum_{i=0}^n z_{i,n}\Bigr) F\,.
\end{equation*}
\end{prop}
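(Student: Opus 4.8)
The plan is to prove this by the standard mechanism behind every Whittaker-Toda correspondence: the difference operator $H$ of \eqref{Hamil} is the \emph{radial part} of a central (Drinfeld-Casimir) element of $\Uv$ acting on the Shapovalov pairing of the Whittaker vectors, while the eigenvalue $\sum_{i=0}^n z_{i,n}$ is the scalar by which that central element acts on the Verma module $\V^\la$. This is the representation-theoretic content of \cite{Sev,Et}.

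First I would identify the central element. Let $V$ be the $(n+1)$-dimensional vector representation of $\Uv$, with weights $\epsilon_0,\dots,\epsilon_n$, and let $C=(\mathrm{tr}_V\otimes\mathrm{id})\bigl((v^{2\rho}\otimes1)\,\mathcal R_{21}\mathcal R\bigr)$ be the first Drinfeld-Casimir built from $V$. Being central, it acts on $\V^\la$ by a scalar $\chi_\la(C)$, equal to the $q$-trace over the weights of $V$. Since $\sum_{j=i+1}^n\al_j=\epsilon_i-\epsilon_n$ telescopes, we have $z_{i,n}=v^{-2(\la+\rho,\epsilon_i-\epsilon_n)}$ with $z_i=q^{-(\la+\rho,\al_i)}$ and $q=v^2$, so that $\sum_{i=0}^n z_{i,n}$ equals, up to the overall monomial $v^{2(\la+\rho,\epsilon_n)}$, the $q$-trace $\sum_{i=0}^n v^{-2(\la+\rho,\epsilon_i)}$ over the weights $\epsilon_i$ of $V$ --- exactly $\chi_\la(C)$. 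This is precisely where the $\rho$-shift in $z_i=q^{-(\la+\rho,\al_i)}$ is needed.

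Next comes the radial part. Up to the scalar $v^{-(\beta,\beta)/2+(\la,\beta)}$ in \eqref{Jla0}, the coefficient $J^\la_\beta$ is the Shapovalov pairing $(\theta^\la_\beta,\bar\theta^\la_\beta)$, so $F$ in \eqref{F} is the full pairing $\sum_\beta(\theta^\la_\beta,\bar\theta^\la_\beta)\,y^\beta$, with the formal variables $y^\beta=y_1^{d_1}\cdots y_n^{d_n}$ recording the grading $\beta=\sum d_i\al_i$. Under this dictionary $D_i$ acts on the $\beta$-graded term by $q^{d_i}$ and multiplication by $y_i$ raises $d_i$ by one. I would then compute $(\theta^\la_\beta, C\,\bar\theta^\la_{\beta'})$ by expanding $C$ in triangular (Gauss) form --- lowering generators, then Cartan, then raising generators: the raising factors are absorbed by $\bar\theta^\la$ through the Whittaker condition \eqref{DWhit}, the lowering factors act on $\theta^\la$ after transport by $\sigma$ through \eqref{Whit} (here the twist $K_i^{i-1}$ in \eqref{Whit} is exactly what makes the two sides match), and the Cartan factors contribute weight-dependent powers of $v$. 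The grouplike/Cartan part of $C$ yields $\sum_i z_{i,n}D_i^{-1}D_{i+1}$, while the first nontrivial term of $\mathcal R$ produces the $-y_i$ shift, the Whittaker eigenvalues $1/(1-v^{\pm2})$ supplying the correct coefficient; together they assemble into $D_i^{-1}D_{i+1}\bigl(z_{i,n}(1-y_i)\,\cdot\,\bigr)$, i.e. the operator $H$. Equating the two evaluations of $C$ on $F$ gives $HF=\bigl(\sum_{i=0}^n z_{i,n}\bigr)F$. The hard part is the bookkeeping in this step: producing precisely the factored pattern $D_i^{-1}D_{i+1}z_{i,n}(1-y_i)$ with $D_0=D_{n+1}=1$ and $y_0=0$ requires the explicit PBW/Gauss expansion of $C$ together with careful tracking of the normalisation $v^{-(\beta,\beta)/2+(\la,\beta)}$, the twist $K_i^{i-1}$, and the $\rho$-shift.

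Alternatively --- and this is self-contained given \eqref{JdGZ} --- I would translate the eigenvalue equation into a recursion. Since $D_i\leftrightarrow q^{d_i}$ and $y_i$ lowers $d_i$ by one in the coefficient of $y^d$, reading off the coefficient of $y_1^{d_1}\cdots y_n^{d_n}$ in $HF=\bigl(\sum_i z_{i,n}\bigr)F$ yields the Toda recursion
\begin{equation*}
\sum_{i=0}^n z_{i,n}\bigl(q^{-d_i+d_{i+1}}-1\bigr)J_{d_1,\dots,d_n}
=\sum_{i=1}^n z_{i,n}\,q^{-d_i+d_{i+1}}\,J_{d_1,\dots,d_i-1,\dots,d_n},
\end{equation*}
with the conventions $d_0=d_{n+1}=0$ (one checks this directly for $n=1,2$ from the Examples). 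One then verifies the identity from \eqref{JdGZ}; this route avoids representation theory but is combinatorially heavy, the difficulty being that each $J_{d_1,\dots,d_n}$ is itself a sum over the Gelfand patterns $m_{k,i}$.
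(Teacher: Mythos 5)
The paper itself contains no proof of Proposition \ref{prop:Toda}: it is quoted verbatim from \cite{Sev,Et}, so there is nothing internal to compare your argument against. Your first route is precisely the mechanism of those references, and the pieces you do make explicit are correct: the telescoping identity $\sum_{j=i+1}^n\al_j=\epsilon_i-\epsilon_n$ does give $\sum_{i=0}^n z_{i,n}=v^{2(\la+\rho,\epsilon_n)}\sum_{i=0}^n v^{-2(\la+\rho,\epsilon_i)}$, matching the central character of the Casimir attached to the vector representation; and the recursion you display is exactly what the coefficient of $y_1^{d_1}\cdots y_n^{d_n}$ in $HF=\bigl(\sum_i z_{i,n}\bigr)F$ yields (I checked it reproduces $J_{d_1-1}/J_{d_1}=(1-q^{d_1})(1-q^{d_1}z_1)$ for $n=1$).

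The gap is that neither route is carried out, and in each case the omitted step is the entire content of the proposition. For the first route, ``expanding $C$ in triangular form'' is where the theorem lives: one must exhibit the Gauss/PBW expansion of the Casimir for $\gl_{n+1}$, prove that against the Whittaker conditions \eqref{Whit}, \eqref{DWhit} every term involving a non-simple root vector drops out (this is the nontrivial point in \cite{Sev,Et}, and it is where the twist $K_i^{i-1}$ and the type-$A$ restriction actually enter -- you assert the twist ``is exactly what makes the two sides match'' without demonstrating it), and then conjugate by the prefactor $v^{-(\beta,\beta)/2+(\la,\beta)}$ of \eqref{Jla0}, which is quadratic in $\beta$ and hence cannot be absorbed into the $y_i$; it genuinely reshapes the naive radial part into the specific form $D_i^{-1}D_{i+1}\bigl(z_{i,n}(1-y_i)\,\cdot\,\bigr)$ of \eqref{Hamil}. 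A further point you do not address: the pairing \eqref{Shapo} is between a $\Uv$-module and a $\Uvv$-module via the anti-isomorphism $\sigma$, so the ``two evaluations of $C$'' argument needs $\sigma(C)$ to be the corresponding central element of $\Uvv$ acting by the matching scalar. For the second route, the recursion is correctly derived from the eigenvalue equation but is never verified from \eqref{JdGZ} beyond $n=1,2$. As written, the proposal is a sound plan that correctly identifies the proof strategy of the cited sources, but it is not yet a proof.
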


The second way is the fermionic formula. 
Here we restrict the general consideration 
in \cite{FFJMM2} to the Cartan matrix of type $A$. 
For a (possibly infinite) interval $[r,s]$, 
consider the sum
\footnote{The definition is modified from that of \cite{FFJMM2}, (2.3),
in order to match with the change of the definition of $z_i$.}
\begin{align}
&I_{d_1,\ldots,d_n}(q,z_1,\ldots,z_n|r,s)
\label{fermionic}\\
&\quad
=\sum_{l_{r,i}+\cdots+l_{s,i}=d_i \atop 1\le i\le n}
\frac{
q^{\sum_{t,t'=r}^s \min(t,t')
\left(\sum_{i=1}^nl_{t,i}l_{t',i}
-\sum_{i=1}^{n-1}l_{t,i}l_{t',i+1}
\right)}\prod_{i=1}^n z_i^{\sum_{t=r}^s t l_{t,i}}}
{\prod_{i=1}^n\prod_{t=r}^s(q)_{l_{t,i}}}\,.
\nn
\end{align}

Then we have 
\begin{prop}\label{prop:fermi}\cite{FFJMM2}
The following formula holds.
\begin{equation*}
J_{d_1,\ldots,d_n}(q,z_1,\ldots,z_n)=
I_{d_1,\ldots,d_n}(q,z_1,\ldots,z_n|0,\infty)\,.
\end{equation*}
\end{prop}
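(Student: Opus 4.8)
The plan is to reduce the claim to the uniqueness already implicit in Proposition~\ref{prop:Toda}. Writing $J_{\mathbf d}=J_{d_1,\dots,d_n}$ and reading off the coefficient of $y_1^{d_1}\cdots y_n^{d_n}$ in the eigenvalue equation $HF=(\sum_{i=0}^n z_{i,n})F$, the difference operator \eqref{Hamil} produces the recursion
\begin{equation*}
\sum_{i=0}^n z_{i,n}\,q^{\,d_{i+1}-d_i}\bigl(J_{\mathbf d}-J_{\mathbf d-\mathbf e_i}\bigr)
=\Bigl(\sum_{i=0}^n z_{i,n}\Bigr)J_{\mathbf d},
\end{equation*}
with the conventions $d_0=d_{n+1}=0$ and the boundary terms $J_{\mathbf d-\mathbf e_0}$, $J_{\mathbf d-\mathbf e_{n+1}}$ omitted. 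This relation is triangular with respect to $|\mathbf d|=\sum_i d_i$, and for generic $\mathbf z$ its diagonal coefficient is nonzero, so it determines every $J_{\mathbf d}$ from the normalization $J_{0,\dots,0}=1$. Consequently it suffices to prove that the fermionic quantities $I_{\mathbf d}=I_{d_1,\dots,d_n}(q,z_1,\dots,z_n|0,\infty)$ of \eqref{fermionic} satisfy the \emph{same} recursion and the same normalization; uniqueness then yields $I_{\mathbf d}=J_{\mathbf d}$ for all $\mathbf d$.

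The normalization is immediate, since the only configuration with all $d_i=0$ is the empty one, whence $I_{0,\dots,0}=1$. For the recursion I would exploit the interval structure of \eqref{fermionic} visible in the notation $I(\,\cdot\,|r,s)$. Two elementary facts organize the computation. First, a global time-shift $t\mapsto t+1$ changes the exponent $\sum_{t,t'}\min(t,t')(\cdots)$ by the constant $\tfrac12(\beta,\beta)$ with $\beta=\sum_i d_i\al_i$, and changes the $z$-weight by $\prod_i z_i^{d_i}$, giving the self-similarity
\begin{equation*}
I_{\mathbf d}(q,z_1,\dots,z_n|1,\infty)=q^{(\beta,\beta)/2}\,\Bigl(\prod_{i=1}^n z_i^{d_i}\Bigr)\,I_{\mathbf d}(q,z_1,\dots,z_n|0,\infty).
\end{equation*}
Second, because $\min(0,t)=0$, the occupation numbers at the left endpoint $t=0$ decouple from the rest of the configuration, contributing only the factors $1/\prod_i(q)_{l_{0,i}}$. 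Peeling off this boundary layer expresses $I(\,\cdot\,|0,\infty)$ through $I(\,\cdot\,|1,\infty)$, and the self-similarity then feeds it back into $I(\,\cdot\,|0,\infty)$, producing closed $q$-series relations among the $I_{\mathbf d}$.

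The main obstacle is to match these relations termwise against the Toda recursion above. The delicate point is the color-mixing term $-\sum_i l_{t,i}l_{t',i+1}$ in the exponent of \eqref{fermionic}: it is precisely this term that breaks the symmetry between adjacent colors $i$ and $i+1$, and it has to reproduce the $D_i^{-1}D_{i+1}$ pattern together with the prefactors $q^{\,d_{i+1}-d_i}$ of \eqref{Hamil}. Carrying this out amounts to a family of $q$-hypergeometric identities, one for each $\mathbf d$, in which a Durfee-type rearrangement of the $\min(t,t')$ form and the elementary recursion for $1/(q)_l$ must conspire to single out the single-particle shifts $\mathbf d\mapsto\mathbf d-\mathbf e_i$. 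Establishing this identity is the genuine content of the statement, and it is the step I expect to demand the most care; it is carried out in \cite{FFJMM2}.

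Alternatively, one can bypass the Toda equation and compute the scalar product $(\theta^\la_\beta,\bar\theta^\la_\beta)$ attached to \eqref{Whit} directly, in parallel with the derivation of \eqref{JdGZ} but expanding the Whittaker vectors in a \emph{time-graded} PBW-type basis in place of the Gelfand-Zetlin basis of \eqref{GTWhit}. In that route the denominators $\prod_t(q)_{l_{t,i}}$ arise from $q$-symmetrizing equal-color modes and the $\min(t,t')$ quadratic form from their graded commutation; this is the approach of \cite{FFJMM2}, and it has the advantage of yielding the general-interval formula $I(\,\cdot\,|r,s)$ in one stroke.
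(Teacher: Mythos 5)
The paper does not prove this proposition at all: it is quoted verbatim from \cite{FFJMM2} (``we quote these facts from the literature''), so there is no internal argument to compare against. Judged as a standalone proof, your proposal has a genuine gap, and you name it yourself: after correctly reducing the claim to showing that $I_{\mathbf d}(q,z|0,\infty)$ satisfies the Toda recursion with $I_{0,\dots,0}=1$, you stop at the decisive identity and defer it to \cite{FFJMM2}. Everything up to that point is sound --- the recursion you extract from \eqref{Hamil} is correct (the only spurious item is the mention of a boundary term $J_{\mathbf d-\mathbf e_{n+1}}$, which does not occur; the sole omitted shift is $i=0$ because $y_0=0$), the diagonal coefficient $\sum_{i=0}^n z_{i,n}(q^{d_{i+1}-d_i}-1)$ indeed vanishes only for $\mathbf d=0$ at generic $z$, and both the time-shift self-similarity $I_{\mathbf d}(|1,\infty)=q^{(\beta,\beta)/2}\prod_iz_i^{d_i}\,I_{\mathbf d}(|0,\infty)$ and the decoupling of the $t=0$ layer are correct elementary facts. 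But these two facts by themselves yield a \emph{peeling} relation expressing $I_{\mathbf d}(|0,\infty)$ through lower $|\mathbf d|$, which is a different recursion from the Toda one; matching the two (or verifying either for $J$ and $I$ simultaneously) is exactly the content of the proposition, and it is left undone.

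For the record, the route actually taken in \cite{FFJMM2} is neither of your two sketches precisely, but is closest to your second one: one inserts powers of the Drinfeld Casimir element $u$ into the Shapovalov pairing $(\theta^\la_\beta,\bar\theta^\la_\beta)$ and evaluates in two ways, once via the scalar $q^{-(\beta,\beta)/2+(\la+\rho,\beta)}$ by which $u$ acts on weight spaces and once via the intertwining relations $E_iu=uK_i^2E_i$; iterating produces the sum over occupation numbers $l_{t,i}$ with the $\min(t,t')$ quadratic form, the index $t$ recording the power of $u$. This is precisely the mechanism this paper generalizes in Section~\ref{sec:quasi} with the partial Casimir elements $u_k$, so if you want a self-contained argument in the spirit of the present paper, that is the computation to carry out rather than the Toda-recursion matching.
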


\section{Character of the principal subspace}\label{sec:princ}

Consider the affine Lie algebra 
$\widehat{\mathfrak{sl}}_{n+1}
=
\sln[t,t^{-1}]\oplus \C c\oplus \C d$. 
Let $M^k$ be the integrable highest weight vacuum 
module of level $k\in \Z_{\ge0}$. 
Namely $M^k$ is the irreducible highest weight 
$\widehat{\mathfrak{sl}}_{n+1}$-module 
generated by the highest weight vector $w$, such that 
\begin{equation*}
(x\otimes t^j)w=0\qquad (x\in \sln,\ j\ge 0)\,,
\end{equation*}
and the canonical central element $c$ acts as the scalar $k$.
Let $\widehat{\mathfrak{n}}_+=
\mathfrak{n}_+\otimes\C[t,t^{-1}]$ 
be the current algebra over the nilpotent
subalgebra $\mathfrak{n}_+$ of $\sln$. 
The $\widehat{\mathfrak{n}}_+$-submodule generated from $w$, 
\begin{equation*}
V^k=U(\widehat{\mathfrak{n}}_+)w\ \subset M^k
\end{equation*}
is called the principal subspace of $M^k$. 

The following fermionic formula is known \cite{FS} (see also
\cite{FJMMT}). 
\begin{equation*}
\mathop{\rm ch} V^k
=\sum_{l_{t,i}\ge0}
\frac{
q^{\sum_{t,t'=1}^k\min(t,t')
\left(\sum_{i=1}^nl_{t,i}l_{t',i}
-\sum_{i=1}^{n-1}l_{t,i}l_{t',i+1}
\right)}\prod_{i=1}^nz_i^{\sum_{t=1}^k tl_{t,i}}}
{\prod_{i=1}^n\prod_{t=1}^k(q)_{l_{t,i}}}\,.
\end{equation*}
In the notation of \eqref{fermionic}, we have 
\begin{equation*}
\mathop{\rm ch} V^k
=\sum_{d_1,\ldots,d_n\ge 0}
I_{d_1,\ldots,d_n}(z_1,\ldots,z_n|1,k)\,.
\end{equation*}

The main result of the present note is the following bosonic 
formula, which generalizes a result of \cite{FFJMM} for $n=2$. 
\begin{thm}\label{THEREM}
The character of the principal subspace of the 
level $k$ vacuum module over $\widehat{\mathfrak{sl}}_{n+1}$ 
is given by 
\begin{align*}
\mathop{\rm ch} V^k
&=\sum_{d_1,\cdots,d_n\ge 0}
q^{k(\sum_{i=1}^nd_i^2-\sum_{i=1}^{n-1}d_id_{i+1})}
z_1^{kd_1}\cdots z_n^{kd_n}
\\
&\times
\tilde{J}_{d_1,\cdots,d_n}
(q,q^{2d_1-d_2}z_1,q^{-d_1+2d_2-d_3}z_2,\cdots,q^{-d_{n-1}+2d_n}
z_n),
\end{align*}
where 
\begin{equation*}
\tilde{J}_{d_1,\cdots,d_n}(q,z_1,\cdots,z_n)=\frac{1}{\prod_{0\le i<j\le n}
(qz_{i,j})_\infty}\cdot 
J_{d_1,\cdots,d_n}(q,z_1^{-1},\cdots, z_n^{-1})\,, 
\end{equation*}
and $J_{d_1,\cdots,d_n}(q,z_1,\ldots,z_n)$ is given by \eqref{JdGZ}.
\end{thm}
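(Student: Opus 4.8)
The plan is to derive the bosonic formula directly from the two fermionic descriptions that are already available: the finite-interval formula $\mathop{\rm ch} V^k=\sum_{d_1,\dots,d_n\ge0}I_{d_1,\dots,d_n}(q,z_1,\dots,z_n\,|\,1,k)$ and Proposition~\ref{prop:fermi}, which identifies $J_{d_1,\dots,d_n}=I_{d_1,\dots,d_n}(q,z_1,\dots,z_n\,|\,0,\infty)$. Together with the explicit Gelfand--Zetlin expression \eqref{JdGZ}, these turn the theorem into a combinatorial identity between a finite statistical sum over the lattice $[1,k]$ and the claimed bosonic sum. As a first step I would normalise the finite-interval sum by the reflection $t\mapsto k+1-t$. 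Writing $\beta=\sum_i d_i\al_i$, so that $\tfrac12(\beta,\beta)=\sum_i d_i^2-\sum_i d_id_{i+1}$, and using $\min(t,t')=(k+1)-s-s'+\min(s,s')$ under $t=k+1-s$, this reflection factors out (inside each $d$-summand) the extremal monomial $q^{k(\beta,\beta)/2}z_1^{kd_1}\cdots z_n^{kd_n}$, up to a single overall factor $q^{(\beta,\beta)/2}\prod_i z_i^{d_i}$, and converts every variable into $z_i^{-1}$ together with the shift $z_i\mapsto q^{2d_i-d_{i-1}-d_{i+1}}z_i$ (with $d_0=d_{n+1}=0$). A short check shows these are exactly the arguments of $\tilde J_{d_1,\dots,d_n}$ in the statement, so the theorem is reduced to showing that the ``boundary'' of the reflected sum contributes precisely the infinite product $\prod_{0\le i<j\le n}(q z_{i,j})_\infty^{-1}$ multiplying the semi-infinite sum $J_{d_1,\dots,d_n}(q,z_1^{-1},\dots,z_n^{-1})$.

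For the core identity I would pass to cumulative (partition) coordinates $n_{s,i}=\sum_{t\ge s}l_{t,i}$, in which the quadratic form in \eqref{fermionic} becomes a sum of local squares and the interval $[1,k]$ is encoded by a nesting condition. I would then dissect each configuration by its minimal occupations --- the Durfee-type rectangles attached to the positive roots $\epsilon_i-\epsilon_j$ ($0\le i<j\le n$), equivalently to the monomials $z_{i,j}=z_{i+1}\cdots z_j$ --- and sum the unbounded ``top'' columns by Euler's identity $\sum_m x^m/(q)_m=1/(x)_\infty$. This is what produces the infinite products $(q z_{i,j})_\infty^{-1}$ indexed by $0\le i<j\le n$, while the bounded remainder reorganises, through the Gelfand-pattern summation of \eqref{JdGZ}, into $J_{d_1,\dots,d_n}(q,z_1^{-1},\dots,z_n^{-1})$. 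The most systematic way to organise the dissection is an induction on the rank $n$: the Drinfeld Casimir element of the subalgebra $\cA_{n-1}$ inside $\cA_n$ in the tower \eqref{tower} peels off the top color, reducing the $\sln$ identity to the $\mathfrak{sl}_n$ one and matching the nested rows $\la^{(i)}$ of the Gelfand pattern and the factorised coefficients of the Whittaker vector in \eqref{GTWhit}.

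The genuine obstacle is the exactness at finite $k$. It must be emphasised that the rearrangement is \emph{not} a term-by-term identity in the index $d$: already for $n=1$ the $d$-gradings on the fermionic and bosonic sides differ, and the $m$-th bosonic summand, expanded as a power series in $z$, begins only at order $z^{2m}$. The delicate point is that the infinite products and the factors carrying negative powers of $q$ and of $z_i^{-1}$ --- such as $(q^{-d_2+m}z_1)_m$ in \eqref{SL3} --- conspire, through cancellations of these negative powers, so that the bosonic sum reproduces $\sum_d I_{d_1,\dots,d_n}(q,z_1,\dots,z_n\,|\,1,k)$ for every finite $k\ge1$ rather than only in the limit $k\to\infty$. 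Proving that these cancellations occur --- equivalently, that the Casimir recursion closes at finite level --- is the hard part; once it is in place, the bookkeeping of the shifts, the signs and the phase $q^{p(m)}$ against \eqref{JdGZ} is routine.
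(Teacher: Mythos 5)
You have correctly reduced the theorem to the right combinatorial statement --- that the finite-interval fermionic sum $\sum_\beta J^\la_\beta[1,k]$ decomposes exactly, for every finite $k$, into a sum over the extremal label $\al=\sum_i d_i\al_i$ of a semi-infinite piece (giving $J_{d_1,\dots,d_n}(q,z_1^{-1},\dots,z_n^{-1})$ with the shifted arguments) times the infinite product $\prod_{0\le i<j\le n}(qz_{i,j})_\infty^{-1}$ --- and your bookkeeping of the reflection $t\mapsto k+1-t$, the extremal monomial $q^{k(\beta,\beta)/2}z_1^{kd_1}\cdots z_n^{kd_n}$, and the argument shifts $z_i\mapsto q^{2d_i-d_{i-1}-d_{i+1}}z_i$ is consistent with the identification $q^{-(\la-\al+\rho,\al_i)}=q^{(\al,\al_i)}z_i$ used in the paper. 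The evaluation $\sum_{\gamma}J^\la_\gamma[1,\infty)=\prod_{0\le i<j\le n}(qz_{i,j})_\infty^{-1}$ via Euler's identity is also as in the paper.

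However, there is a genuine gap, and you name it yourself: the exactness at finite $k$ of this decomposition is precisely the content of the theorem, and your proposal does not prove it. The paper does not prove it combinatorially either; it imports the identity
\begin{equation*}
J^\la_\beta[0,k]=\sum_{\alpha\in Q^+}
J_\alpha^{\al-\lambda-2\rho}[0,\infty)\,
J^{\lambda-\alpha}_{\beta-\alpha}[0,\infty)\,
q^{k((\alpha,\alpha)/2-(\lambda+\rho,\alpha))}
\end{equation*}
from Theorem 4.13 of \cite{FFJMM2} (proved there by inserting the Drinfeld Casimir element into the Whittaker scalar product and computing it in two ways), then applies the elementary shift relation $J^\la_\beta[r+1,s+1]=q^{(\beta,\beta)/2-(\la+\rho,\beta)}J^\la_\beta[r,s]$ to pass from $[0,k]$ to $[1,k]$. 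Note that the two factors carry the \emph{shifted} highest weights $\al-\la-2\rho$ and $\la-\al$; this reflection of the weight around the extremal vector is exactly what makes the decomposition exact rather than merely asymptotic, and it is not produced by a Durfee-rectangle dissection of configurations, which would only recover the $k\to\infty$ limit. Your sketch (``the cancellations of negative powers conspire so that the identity holds at finite $k$'') is a restatement of the claim, not an argument for it. To complete your proof along the paper's lines you must either cite the \cite{FFJMM2} identity or reprove it; the Casimir-insertion mechanism you allude to at the end is indeed the right tool, but it is used in Section \ref{sec:quasi} for the refined tower decomposition, and for the present theorem the single-Casimir version already suffices and still has to be carried out.
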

\begin{proof}
Writing $(q)_\beta=(q)_{d_1}\cdots (q)_{d_n}$ for 
$\beta=d_1\al_1+\cdots+d_n\al_n$, let us 
introduce the notation 
\begin{align}
J^\la_\beta[r,s]=
\sum_{\sum_{t=r}^s \gamma_t=\beta}
\frac{q^{(1/2)\sum_{r\le t,t'\le s}\min(t,t')(\gamma_t,\gamma_{t'})
-(\la+\rho,\sum_{t=r}^st\gamma_t)}}{\prod_{t=r}^s(q)_{\gamma_t}}\,.
\label{Jrs}
\end{align}
Then \eqref{fermionic} can be written as
\begin{equation*}
I_{d_1,\ldots,d_n}(q,z_1,\ldots,z_n|r,s)=
J^{\la_1\omega_1+\cdots+\la_n\omega_n}_{d_1\al_1+\cdots+d_n\al_n}[r,s]\,,
\end{equation*}
and hence 
\begin{equation*}
\mathop{\rm ch} V^k=\sum_{\beta\in Q^+}J^\la_\beta[1,k]\,.
\end{equation*}

The following formula was proved in \cite{FFJMM2} (see   
(4.26), Theorem 4.13):
\begin{align*}
J^\la_\beta[0,k]=\sum_{\alpha\in Q^+}
J_\alpha^{\al-\lambda-2\rho}[0,\infty)
J^{\lambda-\alpha}_{\beta-\alpha}[0,\infty)\times
q^{k((\alpha,\alpha)/2-(\lambda+\rho,\alpha))}\,.
\end{align*}
Using the relation 
\begin{equation*}
J^\la_\beta[r+1,s+1]=q^{(\beta,\beta)/2-(\la+\rho,\beta)}
J^\la_\beta[r,s], 
\end{equation*}
we deduce that
\begin{equation}
J^\la_\beta[1,k]=
\sum_{\al\in Q^+}J^{\al-\la-2\rho}_\al[0,\infty)
J^{\la-\al}_{\beta-\al}[1,\infty)
\times
q^{k((\al,\al)/2-(\la+\rho,\al))}\,.
\label{JJJ}
\end{equation}
On the other hand, it is simple to show that 
\begin{equation*}
\sum_{\gamma\in Q^+}J^{\la}_\gamma[1,\infty)
=\frac{1}
{\prod_{0\le i<j\le n}
(q z_{i,j})_\infty}
\,.
\end{equation*}
Summing \eqref{JJJ} over $\beta$, setting $\al=\sum_{i=1}^nd_i\al_i$
 and noting that $q^{-(\la-\al+\rho,\al_i)}=q^{(\al,\al_i)}z_i$, 
we obtain the desired formula. 
\end{proof}
\medskip

\section{Quasi-classical expansion}
\label{sec:quasi}

In this section we extend 
the fermionic formula \eqref{fermionic}
 to the setting 
corresponding to the tower of subalgebras \eqref{tower},  
and discuss its `quasi-classical' decomposition. 
In the following, 
we indicate by suffix $k$ the 
quantities associated with the subalgebra $\mathcal{A}_k\simeq U_v(\gl_{k+1})$:
for instance, $P_k=\oplus_{i=0}^{k}\Z\epsilon_i$ and   
$Q^+_k=\oplus_{i=1}^{k}\Z_{\ge0}\alpha_i$.  

Let 
$-\infty\le r_1\le\cdots\le r_{n}\le\infty$  
be a non-decreasing sequence 
of integers (possibly including $\pm\infty$), and set 
$I=[r_1,\infty)$. 
Generalizing \eqref{Jrs}, 
we define for $\la\in P_{n}$ and $\beta\in Q^+_{n}$
\begin{align}
& J\left({{r_1,r_2} \atop Q^+_1} \Bigl|\cdots
 \Bigl| {{r_{n},\infty} \atop Q^+_{n}}
 \Bigl| \la,\beta\right)
=\sum_{\{\ga_t\}_{t\in I}}
\frac{1}{\prod_{t\in I} (q)_{\ga_t}}q^{B(\{\ga_t\}|\la)},
\label{Fer-tower}\\
&B(\{\ga_t\}|\la)
=\frac{1}{2}\sum_{t,t'\in I}\min(t,t')(\ga_t,\ga_{t'})
-(\la+\rho,\sum_{t\in I} t\ga_t).
\label{B}
\end{align}
The sum in \eqref{Fer-tower} is taken over 
$\ga_t\in Q^+_{n}$ ($t\in I$) such that 
\begin{align*}
&\sum_{t\in I}\ga_t=\beta,\\
&\text{$\ga_t\in Q^+_i$ for $r_i\le t< r_{i+1}$ ($i=1,\cdots, n$)}. 
\end{align*}
In the new notation we have $J^\la_\beta[r,s]=J\left({{r,s}\atop Q^+_n}\Bigl|\la,\beta\right)$. 

Recall that in the completion of $U_v(\mathfrak{gl}_{n+1})$
there is an element $u$ which satisfies
\begin{align*}
K_iu=uK_i,\quad E_iu=u K_i^2 E_i, \quad F_iu=u F_iK_i^{-2}
\qquad \text{for all $i=1,\cdots,n$}. 
\end{align*}
Up to multiplication by a simple factor, 
$u$ is the Drinfeld Casimir element. 
On each weight component $\mathcal{V}_{\beta}^\la$ 
of the Verma module, $u$
acts as the scalar $q^{-(\beta,\beta)/2+(\la+\rho,\beta)}$. 
In \cite{FFJMM2}, the fermionic formula \eqref{fermionic} 
 was derived by inserting $u$ 
in the scalar product \eqref{Jla0} which defines the Whittaker vectors
and calculating it in two different ways. 
The same calculation can be repeated using 
`partial' Drinfeld Casimir element. Namely let  
$u_k$ denote the counterpart of $u$ corresponding to 
the subalgebra $\mathcal{A}_k$, $k=1,\cdots,n$. 

\begin{prop}
Let $r_1\le\cdots\le r_{n}\le 0$, and set $r_{n+1}=0$. 
Then we have
\begin{align}
v^{-(\beta,\beta)/2+(\la,\beta)}
(\prod_{k=1}^n u_k^{-r_k+r_{k+1}}\cdot \theta^\la_\beta,
\bar\theta^\la_\beta)
=
J\left({{r_1,r_2} \atop Q^+_1} \Bigl|
\cdots
 \Bigl|  {{r_{n},\infty} \atop Q^+_{n}}
 \Bigl| \la,\beta
\right). 
\label{partial1}
\end{align}
\end{prop}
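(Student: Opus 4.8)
The plan is to rerun, with the partial Casimir elements $u_k$ replacing the single element $u$, the two-way evaluation of the Shapovalov product by which Proposition~\ref{prop:fermi} was established in \cite{FFJMM2}. Recall the mechanism in the single-element case. The insertion of $u^{-r}$ into $(\theta^\la_\beta,\bar\theta^\la_\beta)$ is evaluated on one hand by letting $u$ act as the scalar $q^{-(\beta,\beta)/2+(\la+\rho,\beta)}$ on $\V^\la_\beta$, which, together with the shift relation $J^\la_\beta[r+1,s+1]=q^{(\beta,\beta)/2-(\la+\rho,\beta)}J^\la_\beta[r,s]$, reproduces $J^\la_\beta[r,\infty)=v^{-(\beta,\beta)/2+(\la,\beta)}(u^{-r}\theta^\la_\beta,\bar\theta^\la_\beta)$; on the other hand by expanding both Whittaker vectors through the Toda recursion into a sum over particle configurations placed at integer times, the inserted Casimir powers tracking those times, so that the Shapovalov pairing of configurations at times $t,t'$ yields the quadratic exponent $\tfrac12\sum_{t,t'}\min(t,t')(\ga_t,\ga_{t'})$ of \eqref{B}.

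First I would assemble the properties of the partial elements. Each $u_k$ is built from the generators of $\cA_k\simeq U_v(\gl_{k+1})$, so it satisfies the relations $E_iu_k=u_kK_i^2E_i$, $F_iu_k=u_kF_iK_i^{-2}$ for $i\le k$ exactly as $u$ does inside $\cA_k$, it commutes with $E_i,F_i$ for $i\ge k+2$, and it acts diagonally in the Gelfand-Zetlin basis adapted to the tower \eqref{tower}, with eigenvalue the $\cA_k$-Casimir scalar on the corresponding $\cA_k$-weight space. The decisive bookkeeping point is a \emph{telescoping}: the simple root $\al_i$ belongs to $\cA_k$ precisely when $k\ge i$, so the total Casimir power seen by color $i$ is $\sum_{k\ge i}(r_{k+1}-r_k)=r_{n+1}-r_i=-r_i$. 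Consequently, inserting $\prod_{k=1}^nu_k^{-r_k+r_{k+1}}$ shifts color $i$ so that it is first created at time $t=r_i$, which is exactly the tower constraint $\ga_t\in Q^+_i$ for $r_i\le t<r_{i+1}$ imposed on the right-hand side of \eqref{Fer-tower}. As a consistency check, taking $r_1=\cdots=r_n=r$ leaves only $u_n^{-r}=u^{-r}$ and the statement collapses to the single-interval identity $J^\la_\beta[r,\infty)$ recalled above.

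With these preliminaries the proof is the two-way evaluation of $v^{-(\beta,\beta)/2+(\la,\beta)}(\prod_{k=1}^nu_k^{-r_k+r_{k+1}}\theta^\la_\beta,\bar\theta^\la_\beta)$: the scalar side multiplies out the eigenvalues of the $u_k$, while the expansion side assembles the position-graded sum in which color $i$ now enters only from time $r_i$ onward. The step I expect to be the main obstacle is precisely this combinatorial matching: one must verify that commuting the several $u_k$ through the lowering operators reproduces the exponent $B(\{\ga_t\}\mid\la)$ of \eqref{B} with the correct staggered ranges, i.e. that the resolution $\min(t,t')=\sum_s[\,s\le t\,][\,s\le t'\,]$ underlying the $\min$-form survives when distinct colors start at distinct positions $r_i$. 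The delicate ingredient here is the interaction of $u_k$ with the boundary pair $E_{k+1},F_{k+1}$, the operators that move between $\cA_k$-highest-weight components; controlling this interaction is what makes the staggered telescoping produce exactly \eqref{Fer-tower}, the remaining ingredients being the orthonormality of the Gelfand-Zetlin basis and the bilinearity of the Shapovalov pairing \eqref{Shapo}.
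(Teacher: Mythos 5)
Your proposal follows essentially the same route as the paper: the paper's proof consists of a single sentence asserting that the calculation is identical to that of Theorem 3.1 of \cite{FFJMM2} (the two-way evaluation of the Shapovalov pairing with Casimir insertions), now performed with the partial elements $u_k$. Your telescoping observation that color $i$ accumulates the total power $\sum_{k\ge i}(r_{k+1}-r_k)=-r_i$, matching the constraint $\ga_t\in Q^+_i$ for $r_i\le t<r_{i+1}$ in \eqref{Fer-tower}, is exactly the bookkeeping the paper leaves implicit, so the approaches coincide.
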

\begin{proof}
The calculation is the same as in \cite{FFJMM2}, 
Theorem 3.1, and the proof following it.
\end{proof}

For each $k=1,\cdots,n-1$, the Whittaker vector \eqref{Jla0}
admits the decomposition 
in terms of those for the lower rank subalgebra $\cA_k$:
\begin{align}
&\theta^\la_\beta
=\sum_{\la^{(n-1)},\cdots,\la^{(k)}}
(1-q)^{-\sum_{i=k+1}^n\sum_{l=0}^{i-1}(\la_{l,n}-\la_{l,i-1})-
\sum_{l=0}^k(k-l)(\la_{l,n}-\la_{l,k})}
\label{decomp}\\
&\quad\times 
\prod_{i=k+1}^n C_i(\la^{(i)},\la^{(i-1)})
\cdot
\theta(\la^{(n)},\cdots,\la^{(k)}|\beta^{(k)}),
\nn\\
&
\theta(\la^{(n)},\cdots,\la^{(k)}|\beta^{(k)})
\\
&\quad
=\sum_{\la^{(k-1)},\cdots,\la^{(0)}}
(1-q)^{-\sum_{i=1}^k \sum_{l=0}^{i-1} (\la_{l,k}-\la_{l,i-1})}
\prod_{i=1}^k C_i(\la^{(i)},\la^{(i-1)})\cdot
\ket{\la^{(n)},\cdots,\la^{(0)}}.
\nn
\end{align}
Here $\beta^{(i)}$ are defined by 
\begin{align}
&\beta^{(n)}=\beta,\\
&\left(\la^{(i+1)}-\beta^{(i+1)}\right)\Big|_{P_i}=\la^{(i)}-\beta^{(i)}
  \quad (\beta^{(i)}\in Q^+_i, i=1,\cdots,n-1)\,,
\label{betai}
\end{align}
where $\epsilon_k\big|_{P_i}=\sum_{j=0}^i\delta_{j,k}\epsilon_k$
is the projection to $P_i$.
Note that from \eqref{betai} we see that
 \begin{align*}
\beta^{(i)}-\beta^{(i-1)}=(\lambda_{0,i}-\lambda_{0,i-1})(\alpha_1+\cdots+\alpha_i)
+(\lambda_{1,i}-\lambda_{1,i-1})(\alpha_2+\cdots+\alpha_i)+\cdots+
(\lambda_{i-1,i}-\lambda_{i-1,i-1})\alpha_i.
\end{align*}
Therefore, the sum $\sum_{\la^{(n-1)},\cdots,\la^{(1)}}$ is equivalent to
the sum over partition of $\beta$,
 \begin{align*}
\beta=\gamma^{(1)}+\cdots+\gamma^{(n)}
\end{align*}
where
 \begin{align*}
\gamma^{(i)}=\beta^{(i)}-\beta^{(i-1)}\in
R^+_i\buildrel{\rm\small def}\over=
\Z_{\geq0}(\alpha_1+\cdots+\alpha_i)
\oplus\Z_{\geq0}(\alpha_2+\cdots+\alpha_i)
\oplus
\cdots
\oplus\Z_{\geq0}\alpha_i.
\end{align*}
Note that $\lambda^{(n)}=\lambda$ and other $\lambda^{(i)}$ are determined by
 \begin{align}\label{LAMBDA}
 \lambda^{(i)}=\left(\lambda^{(i+1)}-\gamma^{(i+1)}\right)\Big|_{P_i}.
 \end{align}
Here
$\theta(\la^{(n)},\cdots,\la^{(k)}|\beta^{(k)})$ is a 
weight component of a Whittaker vector 
with respect to the subalgebra $\cA_k\simeq U_v(\gl_{k+1})$
and its Verma module with highest weight $\la^{(k)}$. 
It is so normalized that the coefficient of 
the vector 
$\ket{\la^{(n)},\cdots,\la^{(0)}}$ satisfying 
$\la_{l,k}=\la_{l,k-1}=\cdots=\la_{l,l}$ for all $0\le l\le k-1$
is $1$. 

\begin{lem}\label{lem:JsJ}
Formula \eqref{Fer-tower} can be decomposed as 
\begin{align}
&J\left({{r_1,r_2} \atop Q^+_1} \Bigl|
\cdots
 \Bigl|  {{r_{n},\infty} \atop Q^+_{n}}
 \Bigl| \la,\beta
\right) 
=
\sum_{\la^{(n-1)},\cdots,\la^{(1)}}
J\left({{r_1,\infty} \atop Q^+_1} \Bigl|\la^{(1)},\gamma^{(1)}\right)
\prod_{i=2}^n d(\la^{(i)},\gamma^{(i)}|r_i)A_i(\lambda^{(i)},\lambda^{(i-1)})^2,
\label{Jdsec4}
\end{align}
where $A_i(\mu,\nu)$ is given in \eqref{A}. 
The coefficients $d(\mu,\nu|r)$ have the factorized form 
\begin{align}\label{d} 
d(\mu,\nu|r)=
v^{-(\nu,\nu)/2+(\mu,\nu)}q^{r\left((\nu,\nu)/2-(\mu+\rho,\nu)\right)}
\bigl((1-q)(1-q^{-1})\bigr)^{-(\rho,\nu)}.
\end{align}
\end{lem}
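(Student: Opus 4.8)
The plan is to evaluate the left-hand side of \eqref{Jdsec4} as the Casimir-weighted Shapovalov pairing supplied by the preceding Proposition, expand both Whittaker vectors in the Gelfand-Zetlin basis by \eqref{GTWhit}, and extract the factorization from orthonormality together with the eigenvalues of the partial Casimir elements $u_k$.

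First I would use \eqref{partial1} to rewrite the left-hand side as $v^{-(\beta,\beta)/2+(\la,\beta)}\bigl(\prod_{k=1}^nu_k^{r_{k+1}-r_k}\cdot\theta^\la_\beta,\bar\theta^\la_\beta\bigr)$ with $r_{n+1}=0$, and substitute \eqref{GTWhit} for $\theta^\la_\beta$ and its analogue at parameter $v^{-1}$ for $\bar\theta^\la_\beta$. Since the symbols $[m]$ are invariant under $v\mapsto v^{-1}$, the amplitudes $A_i$ are unchanged while the phases $v^{p_i}$ and $v^{-p_i}$ cancel in pairs, and the two height prefactors combine into $\bigl((1-v^2)(1-v^{-2})\bigr)^{-ht(\bla)}$. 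Orthonormality $(\ket{\bla},\overline{\ket{\bla'}})=\delta_{\bla,\bla'}$ collapses the double sum to a single sum over Gelfand patterns $\bla$ with top row $\la$, and each $\ket{\bla}$, having a definite $\cA_k$-weight at depth $\beta^{(k)}$ below $\la^{(k)}$, is an eigenvector of $u_k$ with eigenvalue $q^{-(\beta^{(k)},\beta^{(k)})/2+(\la^{(k)}+\rho,\beta^{(k)})}$. At this point the expression is an explicit sum over $\bla$ of products of $A_i^2$, the combined height factor, the global $v$-prefactor, and the Casimir eigenvalues.

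Next I would reindex the sum over $\bla$ by the partition $\beta=\gamma^{(1)}+\cdots+\gamma^{(n)}$, $\gamma^{(i)}=\beta^{(i)}-\beta^{(i-1)}\in R_i^+$, equivalently by the intermediate weights $\la^{(1)},\ldots,\la^{(n-1)}$ (with $\la^{(n)}=\la$ fixed and $\la^{(0)}$ then determined by $\gamma^{(1)}$). Three reorganizations are needed. The height factor distributes level by level through the telescoping identity $ht(\bla)=\sum_{i=1}^n(\rho,\gamma^{(i)})$ (proved row by row from $\sum_{i}(i-l)(\la_{l,i}-\la_{l,i-1})=(n-l)\la_{l,n}-\sum_{i=l}^{n-1}\la_{l,i}$), producing $\prod_i\bigl((1-q)(1-q^{-1})\bigr)^{-(\rho,\gamma^{(i)})}$. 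The Casimir factor $\prod_k u_k^{r_{k+1}-r_k}$, that is the exponent $\sum_k(r_{k+1}-r_k)\bigl(-(\beta^{(k)},\beta^{(k)})/2+(\la^{(k)}+\rho,\beta^{(k)})\bigr)$, is Abel-summed into $\sum_{i=2}^n r_i\bigl((\gamma^{(i)},\gamma^{(i)})/2-(\la^{(i)}+\rho,\gamma^{(i)})\bigr)$ plus a bottom $r_1$-term, and the global $v$-prefactor is split using $\beta=\sum_i\gamma^{(i)}$. For each $i\ge2$ these pieces assemble into $d(\la^{(i)},\gamma^{(i)}|r_i)A_i(\la^{(i)},\la^{(i-1)})^2$ with $d$ as in \eqref{d}.

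Finally, the residual level-one data---the single-term factor $A_1^2$, the row-$0$ height contribution, the bottom Casimir power $q^{-r_1(\cdots)}$ and the level-one part $v^{-(\gamma^{(1)},\gamma^{(1)})/2+(\la^{(1)},\gamma^{(1)})}$ of the prefactor---is exactly the right-hand side of the $n=1$ instance of \eqref{partial1} for $\cA_1\simeq U_v(\gl_2)$ (note $p_1=0$, so there is no phase at this level), hence equals $J\left({{r_1,\infty} \atop Q^+_1} \Bigl|\la^{(1)},\gamma^{(1)}\right)$. Reading off the sum over $\la^{(1)},\ldots,\la^{(n-1)}$ gives \eqref{Jdsec4}. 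I expect the genuine obstacle to be the middle step: verifying that after Abel-summing the Casimir eigenvalues and splitting the global $v$-prefactor, the inter-level cross-terms cancel---this is where the relation \eqref{LAMBDA} between successive $\la^{(i)}$ enters---so that the $q$- and $v$-exponents attached to each $\gamma^{(i)}$ collapse exactly to the closed form \eqref{d} with no residue. The representation-theoretic inputs (the preceding Proposition, orthonormality, the Casimir eigenvalues) are immediate; the content is entirely in this exponent bookkeeping.
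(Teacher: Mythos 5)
Your proposal is correct and follows essentially the same route as the paper: the paper computes the left side via \eqref{partial1} using the nested decomposition \eqref{decomp} of $\theta^\la_\beta$ into $\cA_k$-Whittaker components (which is just a partial resummation of the Gelfand-Zetlin expansion \eqref{GTWhit} you use), the scalar action of each $u_k$ on those components, and the Shapovalov pairing with $\bar\theta^\la_\beta$. The exponent bookkeeping you flag as the real content --- the telescoping of $ht(\bla)$ into $\sum_i(\rho,\gamma^{(i)})$, the Abel summation of the Casimir exponents with cross-terms cancelling via \eqref{LAMBDA}, and the splitting of the global $v$-prefactor --- does go through exactly as you describe and is precisely the ``simplifying the result'' step of the paper's proof.
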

\begin{proof}
The action of $\prod_{k=1}^nu_k^{-r_k+r_{k+1}}$ can be calculated 
by using the decomposition \eqref{decomp} and 
$$
u_k\theta(\la^{(n)},\cdots,\la^{(k)}|\beta^{(k)})
=q^{-(\beta^{(k)},\beta^{(k)})/2+(\la^{(k)}+\rho,\beta^{(k)})}
\theta(\la^{(n)},\cdots,\la^{(k)}|\beta^{(k)}).
$$
Taking the scalar product with $\bar\theta^\la_\beta$ and simplifying the result, 
we obtain the assertion. 
\end{proof}

\begin{lem}\label{lem:dJ}
We have
\begin{align}
J\left({{-\infty,r} \atop Q^+_{n-1}} \Bigl|{{r,\infty}\atop Q^+_n}\Bigl|\lambda^{(n)},\gamma^{(n)}\right)=
d(\la^{(n)},\gamma^{(n)}|r)A_n(\lambda^{(n)},\lambda^{(n-1)})^2.
\label{d-in-J}
\end{align}
\end{lem}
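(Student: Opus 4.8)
The plan is to obtain Lemma~\ref{lem:dJ} as the degenerate, single-cut case of the general decomposition already established in Lemma~\ref{lem:JsJ}. Concretely, I would apply \eqref{Jdsec4} to the rank-$n$ datum with the cut-points specialized to $r_1=\cdots=r_{n-1}=-\infty$ and $r_n=r$. With this choice the intervals $[r_i,r_{i+1})$ for $1\le i\le n-2$ are empty, so the left-hand side of \eqref{Jdsec4} becomes exactly the two-layer expression $J\left({{-\infty,r}\atop Q^+_{n-1}}\Bigl|{{r,\infty}\atop Q^+_n}\Bigl|\la^{(n)},\ga^{(n)}\right)$ of Lemma~\ref{lem:dJ}, while the right-hand side becomes
\[
\sum_{\la^{(n-1)},\cdots,\la^{(1)}}
J\left({{-\infty,\infty}\atop Q^+_1}\Bigl|\la^{(1)},\ga^{(1)}\right)
\prod_{i=2}^{n-1}d(\la^{(i)},\ga^{(i)}|-\infty)\,A_i(\la^{(i)},\la^{(i-1)})^2
\;\cdot\; d(\la^{(n)},\ga^{(n)}|r)\,A_n(\la^{(n)},\la^{(n-1)})^2 .
\]
Thus the whole statement reduces to showing that the sum over the lower patterns collapses to the single term with $\ga^{(1)}=\cdots=\ga^{(n-1)}=0$, which by \eqref{LAMBDA} is precisely the term carrying $\la^{(n-1)}=(\la^{(n)}-\ga^{(n)})\big|_{P_{n-1}}$.

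The mechanism for this collapse is the explicit form \eqref{d} of the coefficients. Evaluating $d(\mu,\nu|r)$ at $r=-\infty$, the only $r$-dependence sits in the factor $q^{r((\nu,\nu)/2-(\mu+\rho,\nu))}$; in the grading in which $z_i=q^{-(\la+\rho,\al_i)}$ is treated as a formal variable, the exponent $(\nu,\nu)/2-(\mu+\rho,\nu)$ is strictly negative for every $\nu\in Q^+\setminus\{0\}$, so that $q^{r(\cdots)}\to 0$ as $r\to-\infty$. Hence $d(\la^{(i)},\ga^{(i)}|-\infty)=\delta_{\ga^{(i)},0}$ for $2\le i\le n-1$, and the same suppression forces the doubly infinite rank-one sum to vanish unless its weight is zero, $J\left({{-\infty,\infty}\atop Q^+_1}\Bigl|\la^{(1)},\ga^{(1)}\right)=\delta_{\ga^{(1)},0}$. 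Imposing $\ga^{(1)}=\cdots=\ga^{(n-1)}=0$ forces $\la_{k,i}=\la_{k,i-1}$ for all $k$ whenever $i\le n-1$, whence each surviving factor $A_i(\la^{(i)},\la^{(i-1)})^2=1$ by the highest-weight normalization built into \eqref{A} (all the $q$-factorials and shifted factorials are empty products). Only the top factor $d(\la^{(n)},\ga^{(n)}|r)\,A_n(\la^{(n)},\la^{(n-1)})^2$ then remains, which is the claimed right-hand side.

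The step I expect to be the \emph{main obstacle} is the rigorous justification of the two vanishing statements $d(\mu,\nu|-\infty)=\delta_{\nu,0}$ and $J\left({{-\infty,\infty}\atop Q^+_1}\Bigl|\cdot\right)=\delta_{\ga^{(1)},0}$, that is, the interchange of the $r\to-\infty$ limit with the pattern sum and the convergence of the half-infinite lower layer. One should first fix the total weight $\beta=\ga^{(n)}$, so that only finitely many splittings $\beta=\ga^{(1)}+\cdots+\ga^{(n)}$ with $\ga^{(i)}\in R^+_i$ occur and the outer sum is finite; the required positivity of $(\mu+\rho,\nu)-(\nu,\nu)/2$ for $\nu\neq 0$ must then be checked in the chosen grading, after which the limit is taken termwise. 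Alternatively, and perhaps more transparently, one can rerun the single partial-Casimir computation underlying Lemma~\ref{lem:JsJ} for one cut: inserting $u_{n-1}^{\,r-r'}$ and letting $r'\to-\infty$ projects each rank-$(n-1)$ Whittaker component $\theta(\la^{(n)},\la^{(n-1)}|\beta^{(n-1)})$ onto its highest weight $\beta^{(n-1)}=0$, since $u_{n-1}$ acts there by $q^{-(\beta^{(n-1)},\beta^{(n-1)})/2+(\la^{(n-1)}+\rho,\beta^{(n-1)})}$; the surviving coefficient is $1$ by the normalization of $\theta(\cdots|\beta^{(n-1)})$, and reading off the prefactors $v^{-(\beta,\beta)/2+(\la,\beta)}$ and the $(1-q)$-powers from \eqref{decomp} reproduces exactly the $v$-power and the $((1-q)(1-q^{-1}))^{-(\rho,\nu)}$ in \eqref{d}.
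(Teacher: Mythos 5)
Your proposal is correct, and the ``alternative'' route you sketch in your last paragraph is in substance the paper's own proof: the authors insert $u_{n-1}^{-r_{n-1}+r_n}u_n^{-r_n}$ into the Shapovalov pairing, use the $k=n-1$ case of \eqref{decomp} to get the two-layer identity
\begin{align*}
J\left({{r_{n-1},r_n} \atop Q^+_{n-1}} \Bigl|  {{r_{n},\infty} \atop Q^+_{n}}
 \Bigl|\la^{(n)},\beta^{(n)}\right)
=\sum_{\gamma^{(n)}}
J\left({{r_{n-1},\infty} \atop Q^+_{n-1}} \Bigl|\la^{(n-1)},\beta^{(n)}-\gamma^{(n)}\right)
d(\la^{(n)},\gamma^{(n)}|r_n)A_n(\la^{(n)},\la^{(n-1)})^2,
\end{align*}
and then let $r_{n-1}\to-\infty$ so that only the term $\beta^{(n-1)}=0$ survives, the remaining $J$-factor being $1$ by the normalization of the lower-rank Whittaker component. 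Your primary route --- degenerating the full $n$-cut identity \eqref{Jdsec4} at $r_1=\cdots=r_{n-1}=R\to-\infty$, $r_n=r$, and collapsing all lower factors to Kronecker deltas --- is a legitimate variant: it reuses Lemma \ref{lem:JsJ} instead of re-deriving a two-layer identity, at the cost of taking $n-1$ limits instead of one; the surviving term indeed has $A_i(\la^{(i)},\la^{(i-1)})^2=1$ for $i\le n-1$ and $\la^{(n-1)}=(\la^{(n)}-\gamma^{(n)})\big|_{P_{n-1}}$, as you say. Both routes hinge on exactly the same suppression mechanism, which the paper also invokes with no more justification than ``only one term contributes.''

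Two small caveats on your primary route. Lemma \ref{lem:JsJ} is proved via Casimir insertions with exponents $-r_k+r_{k+1}$, so the $r_i$ must be finite and the specialization to $-\infty$ must be a genuine limit, as you anticipate. And the justification of the vanishing needs care about direction: since $q^{-(\mu+\rho,\al_j)}$ is essentially $z_j$, the factor $q^{-r(\mu+\rho,\nu)}$ contributes \emph{negative} powers of the $z_j$ as $r\to-\infty$, so the suppression $q^{r((\nu,\nu)/2-(\mu+\rho,\nu))}\to0$ holds in the regime where $(\mu+\rho,\nu)>(\nu,\nu)/2$ (e.g.\ $\la$ large dominant, i.e.\ $z_j^{-1}$ small), not in the formal power series ring in the $z_j$ themselves. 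This imprecision is shared with the published proof, so it does not separate your argument from theirs.
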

\begin{proof}
Consider the decomposition \eqref{decomp} with $k=n-1$ 
and apply $u_{n-1}^{-r_{n-1}+r_n}u_n^{-r_n}$.  
By the same computation as in the previous Lemma, we find
\begin{align*}
&J\left({{r_{n-1},r_n} \atop Q^+_{n-1}} \Bigl|  {{r_{n},\infty} \atop Q^+_{n}}
 \Bigl|\la^{(n)},\beta^{(n)}\right)\\
&=\sum_{\gamma^{(n)}\in R^+_n,\beta^{(n-1)}=\beta^{(n)}-\gamma^{(n)}\geq0}
J\left({{r_{n-1},\infty} \atop Q^+_{n-1}} \Bigl|\la^{(n-1)},\beta^{(n-1)}\right)
d(\la^{(n)},\gamma^{(n)}|r_n)A_n(\la^{(n)},\la^{(n-1)})^2.
\end{align*}
Now let $r_{n-1}\to-\infty$.  
In this limit, only one term $\beta^{(n-1)}=0$ in the sum 
contributes. 
With this choice the factor $J$ in the right hand side is $1$ and $\beta^{(n)}=\gamma^{(n)}$, 
hence we obtain the desired result. 
\end{proof}

Substituting \eqref{d-in-J} (with $n$ replaced by $i=2,\cdots,n$)
back into \eqref{Jdsec4}, we arrive at the following result.

\begin{thm}\label{thm:quasi}
Notation being as in \eqref{betai}, we have 
\begin{align}
&J\left({{r_1,r_2} \atop Q^+_1} \Bigl|
\cdots
 \Bigl|  {{r_{n},\infty} \atop Q^+_{n}}
 \Bigl| \la,\beta
\right) 
\label{quasi}\\
&=\sum_{\gamma^{(1)}+\cdots+\gamma^{(n)}=\beta,\gamma^{(i)}\in R^+_i}
J\left({{r_1,\infty} \atop Q^+_1} \Bigl| \la^{(1)},\gamma^{(1)}\right)
\times\prod_{i=2}^nJ\left({{-\infty,r_{i}} \atop Q^+_{i-1}} \Bigl|
 {{r_{i},\infty} \atop Q^+_{i}} \Bigl| \la^{(i)},\gamma^{(i)}\right). 
\nn
\end{align}
\end{thm}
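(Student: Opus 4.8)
The plan is to obtain \eqref{quasi} by feeding the identity \eqref{d-in-J} of Lemma \ref{lem:dJ} into the decomposition \eqref{Jdsec4} of Lemma \ref{lem:JsJ}. First I would recall that \eqref{Jdsec4} already expresses the left-hand side of \eqref{quasi} as a sum over the intermediate Gelfand-pattern weights $\la^{(n-1)},\cdots,\la^{(1)}$, in which the first factor is the rank-one tower sum attached to the interval $[r_1,\infty)$ and the remaining factors form the product $\prod_{i=2}^n d(\la^{(i)},\gamma^{(i)}|r_i)\,A_i(\la^{(i)},\la^{(i-1)})^2$. Thus the whole task reduces to re-interpreting each factor of this product and then reorganizing the summation.

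The main step is to recognize each factor $d(\la^{(i)},\gamma^{(i)}|r_i)\,A_i(\la^{(i)},\la^{(i-1)})^2$ as a two-step tower quantity. This is precisely the content of Lemma \ref{lem:dJ}, which establishes the identity \eqref{d-in-J} for the top rank $n$. Because the coefficient $d(\mu,\nu|r)$ in \eqref{d} and the coefficient $A_i(\mu,\nu)$ in \eqref{A} are defined purely in terms of the triple $(\la^{(i)},\la^{(i-1)},\gamma^{(i)})$, and because the subalgebra $\cA_i\simeq U_v(\gl_{i+1})$ has exactly the same structure as $\cA_n\simeq U_v(\gl_{n+1})$, the proof of Lemma \ref{lem:dJ} goes through verbatim with $n$ replaced by each $i=2,\cdots,n$. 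Substituting these $n-1$ identities back into \eqref{Jdsec4} replaces every factor of the product by the corresponding two-step tower sum associated with the interval pair $[-\infty,r_i)$, $[r_i,\infty)$ and the inclusion $Q^+_{i-1}\subset Q^+_i$.

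It then remains to recast the summation. By the discussion surrounding \eqref{betai} and \eqref{LAMBDA}, once $\la=\la^{(n)}$ and $\beta$ are fixed, summing over the intermediate weights $\la^{(n-1)},\cdots,\la^{(1)}$ is equivalent to summing over the partitions $\beta=\gamma^{(1)}+\cdots+\gamma^{(n)}$ with $\gamma^{(i)}\in R^+_i$, each intermediate weight being recovered from $\la^{(i)}=(\la^{(i+1)}-\gamma^{(i+1)})|_{P_i}$. Performing this change of summation variables in the expression obtained above yields exactly the right-hand side of \eqref{quasi}.

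I expect the only delicate point to be the relabeling $n\mapsto i$ in Lemma \ref{lem:dJ}: one must check that the weights $\la^{(i)}$ and $\la^{(i-1)}$ that enter $A_i$ in \eqref{Jdsec4} are indeed the highest weight and its subordinate restriction that appear in \eqref{d-in-J} after relabeling. This is immediate from \eqref{LAMBDA}, which defines $\la^{(i-1)}$ as the restriction of $\la^{(i)}-\gamma^{(i)}$ to $P_{i-1}$, matching the relation used in the $r_{n-1}\to-\infty$ limit within the proof of Lemma \ref{lem:dJ}. No further computation is needed, since all the factorized evaluations have already been carried out in the two preceding lemmas.
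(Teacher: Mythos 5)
Your proposal is correct and follows exactly the paper's argument: the paper proves Theorem \ref{thm:quasi} by substituting \eqref{d-in-J}, with $n$ replaced by $i=2,\cdots,n$, into the decomposition \eqref{Jdsec4}, which is precisely your plan. Your additional remarks on the relabeling via \eqref{LAMBDA} and the equivalence of the sum over $\la^{(n-1)},\cdots,\la^{(1)}$ with the sum over partitions $\beta=\gamma^{(1)}+\cdots+\gamma^{(n)}$ only make explicit what the paper leaves implicit.
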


This Theorem has the following interpretation. 
In formula \eqref{Fer-tower}, 
let us consider the limiting situation where $r_1\ll \cdots\ll r_{n}$. 
Imagine that we take the sum separately 
over the variables $\ga_t$ taking $t$ to be `in the vicinity' $I_i$ 
of each end point $r_i$, $i=1,\cdots,n$.  
Then the contribution to \eqref{B} would become
\begin{align*}
\sum_{i=1}^{n}B_i-
\sum_{i=1}^{n-1}(\sum_{t\in I_i}t\ga_t,\sum_{j>i}\sum_{t'\in I_j}\ga_{t'})
\end{align*}
where $B_i$ stands for \eqref{B} with $t,t'\in I_i$. 
The corresponding sum, 
with $\sum_{t\in I_i}\ga_t=\gamma^{(i)}$
being fixed,  gives a summand in the right hand side 
of \eqref{quasi}.  
Theorem \ref{thm:quasi} tells that this `quasi-classical decomposition' 
in fact gives an exact answer.

\bigskip

{\it Acknowledgement}.\quad
Research of BF 
is partially supported by SS-3472.2008.2,
Programma RAS, "Elementary particles and Fundamental nuclear physics",
RFBR 08-01-00720,
CNRS-RFBR 09-02-93106 and RFBR 09-01-00242
Research of MJ is supported by the Grant-in-Aid for Scientific
Research B-20340027 and B-20340011.
\bigskip

\end{document}